\newtheorem{theorem}{Theorem}[section]
\newtheorem{lemma}[theorem]{Lemma}
\newtheorem{proposition}[theorem]{Proposition}
\newtheorem{corollary}{Corollary}[section]
\newtheorem{hypothesis}{Hypothesis}
\theoremstyle{definition}
\newtheorem{definition}{Definition}[section]
\theoremstyle{remark}
\newtheorem{remark}{Remark}[section]
\def\R{{\mathbb R}}
\def\N{{\mathbb N}}
\def\supp{\mathop{\rm supp}\nolimits}
\def\M{{\mathcal M}}
\newcommand{\dist}[2]{\Bigl\langle #1, #2 \Bigr\rangle}
\numberwithin{equation}{section}
\newcommand{\tendsto}[1]{\renewcommand{\arraystretch}{0.5}
\begin{array}[t]{c}
\longrightarrow \\
{ \scriptstyle #1 }
\end{array}
\renewcommand{\arraystretch}{1}}
\newcommand{\weaktendsto}[1]{\renewcommand{\arraystretch}{0.5}
\begin{array}[t]{c}
\rightharpoonup \\
{ \scriptstyle #1 }
\end{array}
\renewcommand{\arraystretch}{1}}
\begin{document}
\title[Asymptotic stability for  non positive perturbations of the peakon]{Asymptotic stability for some non positive perturbations of the Camassa-Holm peakon with application to the antipeakon-peakon profile}

\subjclass[2010]{35Q35,35Q51, 35B40} 
\keywords{Camassa-Holm equation, asymptotic stability, peakon, antipeakon-peakon}

\author[L. Molinet]{Luc Molinet}

\address{Luc Molinet, Institut Denis Poisson, Universit\'e de Tours, Universit\'e d'Orl\'eans, CNRS,  Parc Grandmont, 37200 Tours, France.}
\email{Luc.Molinet@univ-tours.fr}

\date{\today}

\begin{abstract}
We continue our investigation on the asymptotic stability of the peakon . In a first step we extend our asymptotic stability result  \cite{L} in the class of functions  whose negative part of the momentum density  is supported in  $]-\infty,x_0] $ and the positive part in $[x_0,+\infty[$  for some $ x_0\in \R$. In a second step this enables us to prove the asymptotic stability of well-ordered train of antipeakons-peakons and, in particular, of the antipeakon-peakon profile. Finally, in the appendix we prove that in the case of a non negative momentum density the energy at the left of any given point decays to zero as time goes to $+\infty $.  This leads to an improvement of the asymptotic stability result stated  in  \cite{L}.
 \end{abstract}

\maketitle

\section{Introduction}
\noindent 
In this paper we continue our investigation of the asymptotic stability of the peakon for the Camassa-Holm  equation (C-H) by studying a particular case of solutions with a non signed momentum density. This leads to an asymptotic stability result for the antipeakon-peakon profile with respect to  some perturbations. 

Recall that the  Camassa-Holm equation reads 
\begin{equation}
u_t -u_{txx}=- 3 u u_x +2 u_x u_{xx} + u u_{xxx}, \quad
(t,x)\in\R^2, \label{CHCH}\\
\end{equation}
and can be derived as a model for the propagation of unidirectional
shalow water waves over a flat bottom   (\cite{CH1}, \cite{Johnson}). 
A rigorous derivation  of the Camassa-Holm equation from the full water waves problem is obtained in    \cite{AL} and  \cite{CL}.

(C-H) is completely integrable (see \cite{CH1},\cite{CH2},
\cite{C1} and \cite{CGI}) and enjoys also a geometrical derivation (cf. \cite{K1}, \cite{K2}). It possesses among  others the
following invariants
\begin{equation} 
M(v)= \int_{\R} (v-v_{xx}) \, dx , \; E(v)=\int_{\R} v^2(x)+v^2_x(x)
\, dx \mbox{ and } F(v)=\int_{\R} v^3(x)+v(x)v^2_x(x) \, dx\;
\label{E}
\end{equation}
and can be written in Hamiltonian form as
\begin{equation}
\partial_t E'(u) =-\partial_x F'(u) \quad .
\end{equation}
It is also worth noticing that \eqref{CH} can be rewritted as
\begin{equation}\label{CHy}
y_t +u y_x+2 u_x y =0 
\end{equation}
 that is a transport equation for the momentum density  $y=u-u_{xx} $.

Camassa and Holm \cite{CH1} exhibited peaked solitary waves
solutions to (C-H) that  are
 given by
$$ u(t,x)=\varphi_c(x-ct)=c\varphi(x-ct)=ce^{-|x-ct|},\; c\in\R.$$
They are called peakon whenever $ c>0 $ and antipeakon  whenever
$c<0$, the profile $ \varphi_c $ being  the unique $H^1$- weak solution of  the differential equation
\begin{equation}\label{eqpeakon}
-c\varphi_c +c \varphi_c''+\frac{3}{2} \varphi_c^2= \varphi_c \varphi_c'' + \frac{1}{2}( \varphi_c')^2
\; .
\end{equation}
Note that the initial value problem associated with (C-H) has to be rewriten as
\begin{equation}
\left\{ \begin{array}{l}
u_t +u u_x +(1-\partial_x^2)^{-1}\partial_x (u^2+u_x^2/2)=0\\
\label{CH} \; 
u(0)=u_0, 
\end{array}
\right.
\end{equation}
to give a meaning to these solutions.

Their stability seems not
to enter the general framework developed
 for instance in \cite{Benjamin}, \cite{GSS}, especially because of the non smoothness of the peakon. However, Constantin and
Strauss \cite{CS1} succeeded in proving their orbital stability by
a direct approach.  
 
 In \cite{L}, making use of the finite speed propagation of the momentum density, we proved a rigidity result for solutions with a non negative momentum density. 
Following the framework developed by Martel and Merle  (see for instance \cite{MM1}, \cite{MM2}) this leads to 
 an asymptotic stability result for the peakon with respect to non negative perturbations of the  momentum density.

 In this paper we continue our study of the asymptotic stability of the peakons of the Camassa-Holm equation. In a first part we extend our result in \cite{L} 
  on the asymptotic stability result for non negative initial momentum density $ y_0\in {\mathcal M}_+ $  by considering the case where $ \supp y_0^- \subset ]-\infty,x_0] $ and 
  $ \supp y_0^+\subset [x_0,+\infty[ $ for some $ x_0\in \R $. Here $ y_0^- $ and $ y_0^+ $ denote respectively the negative and positive part of $ y_0 $. 
  In a second part we  use this last result to prove an asymptotic stability result for well-ordered train of antipeakons and peakons.

   It is well-known (cf. \cite{E}) that under the above  hypothesis on $ y_0$, the solution $ u $ exists for all positive times in $ Y $ and that there exists a $ C^1$-function
   $ t\mapsto x_0(t) $ such that for all non negative times, $ \supp y^-(t) \subset ]-\infty,x_0(t)] $ and 
  $ \supp y^+(t)\subset [x_0(t),+\infty[ $. On the other hand, in contrast to the case of a non negative momentum density, no uniform in time bound on the total variation of $ y $ is known in this case. Actually, only an exponential estimate on $ \|y(t)\|_{\mathcal M}$ has been derived (see \eqref{yL1}).   The main novelty of this work is the proof that,  also in this case, the constructed  asymptotic object has a non negative  momentum density and is  $ Y$-almost localized.  
  To prove our result we separate two possible behaviors of $ x_0(t) $.The first case  corresponds to the case where the point $ x_0(t) $ travels far to the left of the bump (at least for large times). Then locally around the bump, the momentum density is non negative and we can argue as in the case of a non negative momentum density.  The boundedness of the total variation of $ y $  around the bump being  proved by using 
  an almost  decay result of $ E(\cdot) +M(\cdot) $ at the right of a point that travels to the right between $ x_0(\cdot) $ and the bump but far away from these last ones.
  The second case is more involved and corresponds to the case where $ x_0(t) $ stays always close to the bump. Then we  prove that the total variation of $ y $ has to decay exponentially fast in time  in a small interval at the left of $x_0(\cdot)$. This enables us to prove an uniform in time estimate on the total variation of $ y^+(t) $ and thus of $ y(t) $ by using the conservation of  $M(u)$ along the trajectory. We can thus pass to the limit on a sequence of times. The fact that the asymptotic object has a non negative momentum  density follows by extending the   decay of the total variation of   $y(t)   $ to an interval that grows with time.
 
At this stage we would like to emphasize that we are not able to consider  antipeakon-peakon  collisions. Actually, our hypothesis forces the antipeakons to be initially  at the left of the peakons and this property is preserved for positive times. Recently, Bressan and Constantin (\cite{BC1}, \cite{BC2}) succeed to  construct global conservative and dissipative solutions of the \eqref{CH} for initial data in $ H^1(\R) $ by using scalar conservation laws techniques (see also \cite{HR1}, \cite{HR4} for a slightly different point of view). These class of solutions take into account the antipeakon-peakon  collisions which are studied in more details in  \cite{HR2}  and \cite{HR3} (see also \cite{GH} and references therein).
\subsection{Statement of the results}
Before stating our results let us introduce the function space where our initial data will take place. Following \cite{CM1}, we introduce the following space of functions
\begin{equation}
Y=\{ u\in H^1(\R)    \mbox{ such that  }  u-u_{xx} \in {\mathcal M}(\R)  \} \; .
\end{equation}
We denote by $ Y_+ $ the closed subset of $ Y $ defined by  $Y_+=\{u\in Y \, /\, u-u_{xx}\in \M_+ \} $ where $ {\mathcal M}_+ $ is the set of non negative finite Radon measures on $ \R$.

Let $ C_b(\R)$  be the set of bounded continuous functions on $ \R $, $ C_0(\R)$  be  the set of continuous functions  on $ \R $ that tends to $ 0 $ at infinity and let $ I \subset \R$ be an interval.  
 A sequence $\{\nu_n\}\subset {\mathcal M} $ is said to converge tightly (resp. weakly) towards $ \nu\in {\mathcal M} $ if for any $ \phi\in C_b(\R) $ (resp. $C_0(\R)$), $ \langle \nu_n,\phi\rangle \to
  \langle \nu,\phi\rangle $. We will then write $ \nu_n  \rightharpoonup \! \ast \; \nu $ tightly  in $ \M $ (resp. $ \nu_n  \rightharpoonup \! \ast \; \nu $ in $\M$).
 
 Throughout this paper, $ y\in C_{ti}(I;\M) $ (resp.   $ y\in C_{w}(I;\M) $) will signify that for any $ \phi\in C_b(\R) $
  (resp. $\phi\in C_0(\R)$) , 
 $ t\mapsto \dist{y(t)}{\phi} $ is continuous on $ I$ and $ y_n \rightharpoonup \! \ast \; y $ in $ C_{ti}(I;\M) $ (resp. $ y_n \rightharpoonup \! \ast \; y $ in $ C_{w}(I;\M) $) will signify that for any $ \phi\in C_b(\R) $ (resp. $C_0(\R)$),
 $ \dist{y_n(\cdot)}{\phi}\to \dist{y(\cdot)}{\phi} $ in $ C(I)$.

As explained above, the aim of this paper is to extend the results in \cite{L} under  the following hypothesis.
\begin{hypothesis}\label{hyp}
We will say that $ u_0\in Y $ satisfies  Hypothesis 1 if  there exists $ x_0\in \R $ such that its momentum density $ y_0 =u_0-u_{0,xx} $ satisfies  
\begin{equation}\label{hyp1}
\supp y_0^{-}\subset ]-\infty,x_0] \quad \text{ and } \quad \supp y_0^+ \subset [x_0,+\infty[ \; .
\end{equation}
\end{hypothesis}
 As in \cite{L}, the key  tool to prove our results is the following rigidity property for $ Y$-almost localized solutions of \eqref{CH} with non negative density momentum :
\begin{theorem}[\cite{L}]\label{liouville}
 Let $ u \in C(\R; H^1(\R)) $,  with $ u-u_{xx}\in C_{w}(\R; \M_+) $, be a $ Y$-almost localized solution of \eqref{CH} that is not identically vanishing. Then there exists $ c^*	>0 $ and $ x_0\in \R $ such that  $$
 u(t)=c^* \, \varphi(\cdot -x_0-c^* t) , \quad \forall t\in \R \, .
 $$
\end{theorem}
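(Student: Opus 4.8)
The plan is to establish this rigidity (Liouville-type) result by combining the almost-localization hypothesis with the structure of the Camassa-Holm flow on measures, following the Martel--Merle strategy adapted to peakons.

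\medskip

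\textbf{Step 1: Reduction to a well-controlled modulated profile.}
First I would exploit the orbital stability of the peakon (Constantin--Strauss) together with the $Y$-almost localization to produce a $C^1$ modulation point $x(t)$ such that $u(t,\cdot+x(t))$ stays, for all $t\in\R$, close in $H^1$ to some fixed peakon profile $c^*\varphi$, with $c^*>0$ determined by the (conserved) energy $E(u)$. The almost-localization is precisely what guarantees no energy escapes to spatial infinity, so the ``mass at infinity'' terms that normally obstruct compactness vanish uniformly in $t$. At this point the momentum density $y(t)$ is a nonnegative measure whose essential support stays in a bounded neighborhood of $x(t)$, uniformly in time, and $\dot x(t)$ is close to $c^*$.

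\medskip

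\textbf{Step 2: A monotonicity/almost-decay functional.}
The heart of the argument is an almost-monotonicity estimate for a localized version of $E(u)+M(u)$ (or of the energy alone) to the right of a point moving at a speed slightly below $c^*$. Using the equation in the form \eqref{CHy} for the transport of $y$, together with \eqref{CH}, one derives that quantities like $\int \psi(x-x(t)-\sigma t)\,(u^2+u_x^2)\,dx$ are, up to exponentially small errors, monotone in $t$; the sign of $y$ is used crucially here to control the cubic flux term $F'(u)$. Applying this both as $t\to+\infty$ and $t\to-\infty$, and using that the total energy is conserved while the solution is localized, forces these localized energies to be \emph{exactly} constant, i.e.\ there is genuinely no energy transfer across any moving frame with speed $\ne c^*$.

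\medskip

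\textbf{Step 3: Rigidity of the limiting object.}
From Step 2 one extracts that $u(t,\cdot+x(t))$ is in fact time-independent up to the modulation, i.e.\ $u$ is a travelling wave: $u(t,x)=\phi(x-x(t))$ with $\dot x\equiv c^*$ constant, and $\phi\in H^1$ with $\phi-\phi''\ge 0$ a finite measure. Plugging the travelling-wave ansatz into \eqref{CH} shows $\phi$ solves the profile equation \eqref{eqpeakon}; the only $H^1$ solution with nonnegative momentum density that is not identically zero is $c^*\varphi$ (this is where positivity of $c^*$ enters — an antipeakon has momentum density of the wrong sign). Hence $u(t)=c^*\varphi(\cdot-x_0-c^*t)$, which is the claim.

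\medskip

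\textbf{Main obstacle.}
The delicate point is Step 2: making the localization argument robust enough that the error terms really are controlled uniformly on all of $\R$ (not just $[0,\infty)$), and handling the low regularity — $u$ is only $H^1$, $y$ only a measure, so the manipulations with \eqref{CHy} must be justified by working with the flow map of the transport equation or by a careful approximation argument. Propagating the monotonicity estimate backward in time, and ruling out the pathological possibility that the modulation speed oscillates, is where most of the technical work lies; since this is cited as \cite{L}, I would ultimately invoke that reference, but the sketch above is the route I would reconstruct.
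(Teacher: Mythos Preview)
The paper does not prove this theorem: it is imported verbatim from \cite{L} (the statement is labeled ``Theorem [\cite{L}]'' and is described as ``the key tool'' from that reference), so there is no proof in the present paper to compare your proposal against. You yourself note at the end that you would ``ultimately invoke that reference,'' which is exactly what the paper does.

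That said, a brief comment on the sketch itself: your Step~1 is circular as written. You propose to use Constantin--Strauss orbital stability to place $u(t,\cdot+x(t))$ close to a fixed peakon $c^*\varphi$, but orbital stability requires you to already know that $u(0)$ is $H^1$-close to a peakon, and nothing in the hypotheses of the theorem provides that --- the conclusion that $u$ is a peakon is the whole point. In the actual argument of \cite{L}, the $C^1$ modulation $x(\cdot)$ with $\dot x \ge c>0$ is \emph{part of the definition} of $Y$-almost localization (Definition~\ref{defYlocalized}), not something derived, and the closeness to a peakon profile is only obtained at the end. Your Step~3 is also too quick: monotonicity alone does not force $u(t,\cdot+x(t))$ to be time-independent; in \cite{L} the rigidity is obtained by a finer analysis showing that the limiting objects along sequences $t_n\to\pm\infty$ are peakons with the \emph{same} speed, and then using the exact monotonicity (no loss of localized energy) to propagate this back to all times. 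The identification of the travelling-wave profile with the peakon in your last paragraph is correct in spirit.
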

Recall that a $ Y$-almost localized solution is defined in the following way :
\begin{definition}\label{defYlocalized}
 We say that  a solution $ u \in C(\R; H^1(\R)) $ with $ u-u_{xx}\in C_{w}(\R; \M_+) $ of \eqref{CH} is $ Y$-almost localized if there exist $ c>0 $ and a $ C^1 $-function $ x(\cdot) $, with $ x_t\ge c>0 $, for which for any $ \varepsilon>0 $, there exists $ R_{\varepsilon}>0 $ such that for all $ t\in\R $ and all $ \Phi\in C(\R) $ with $0\le \Phi\le 1 $ and $ \supp \Phi \subset [-R_\varepsilon,R_\varepsilon]^c $.
  \begin{equation}\label{defloc}
 \int_{\R} (u^2(t)+u_x^2(t))  \Phi(\cdot-x(t)) \, dx + \Bigl\langle  \Phi(\cdot-x(t)), u(t)-u_{xx}(t)\Bigr\rangle \le \varepsilon \; .
\end{equation}
 \end{definition}
Our first new result is the asymptotic stability of the peakon with respect to perturbations that satisfy Hypothesis \ref{hyp} :
\begin{theorem} \label{asympstab} 
Let $ c>0 $ be fixed. There exists an  universal constant $0<\eta\le 2^{-10}  $ such that for any $ 0<\theta<c $ and any $ u_0\in Y $, satisfying Hypothesis 1., such that 
\begin{equation}\label{difini}
\|u_0-\varphi_c \|_{H^1} \le \eta \Bigl(\frac{\theta}{c}\Bigr)^{8}\; ,
\end{equation}
there exists $ c^*>0 $ with $ |c-c^*|\ll c $ and a $C^1$-function $ x \, : \, \R \to \R $ 
 with $ \displaystyle\lim_{t\to \infty} \dot{x}(t)=c^* $  such that
\begin{equation}
u(t,\cdot+x(t)) \weaktendsto{t\to +\infty} \varphi_{c^*} \mbox{ in } H^1(\R) \; ,
\end{equation}
where $ u\in C(\R; H^1) $ is the solution emanating from $ u_0 $.
Moreover, 
\begin{equation}\label{cvforte}
\lim_{t\to +\infty} \|u(t) -\varphi_{c^*}(\cdot-x(t))\|_{H^1(|x|>\theta t )}=0 \; .
\end{equation}
\end{theorem}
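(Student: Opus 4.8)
The plan is to follow the Martel--Merle scheme: reduce the asymptotic stability statement to the rigidity Theorem \ref{liouville} by a compactness argument, the genuinely new point being to manufacture a limit object whose momentum density is \emph{non-negative} even though we start from a non-signed one. First I would invoke the orbital stability of the peakon (Constantin--Strauss) together with its $Y$-version: the smallness \eqref{difini} propagates, so there is a modulation point $ x(t) $ — conveniently taken to be the point where $ x\mapsto u(t,x) $ attains its maximum — which is $ C^1 $ with $ \dot x(t) $ close to $ c $, and such that $ \|u(t,\cdot+x(t))-\varphi_c\|_{H^1}\lesssim \eta^{1/4}(\theta/c)^{2} $ uniformly for $ t\ge 0 $. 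Simultaneously, since $ u_0 $ satisfies Hypothesis \ref{hyp}, the finite speed of propagation of $ y $ (cf. \cite{E}) provides a $ C^1 $ curve $ x_0(t) $ with $ \supp y^-(t)\subset\,]-\infty,x_0(t)] $ and $ \supp y^+(t)\subset[x_0(t),+\infty[ $ for all $ t\ge 0 $.

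Next I would split according to the asymptotic behaviour of $ x(t)-x_0(t) $. In the first case $ x(t)-x_0(t)\to+\infty $: then for $ t $ large $ y(t) $ is non-negative on a large neighbourhood of the bump, and I would run a Kato-type almost-monotonicity estimate for functionals of the form $ \int (u^2+u_x^2)\psi(\cdot-\rho(t))+\dist{\psi(\cdot-\rho(t))}{y(t)} $ (with $ \psi $ a smooth approximation of a step function), evaluated to the right of an auxiliary point $ \rho(t) $ chosen with $ x_0(t)\ll\rho(t)\ll x(t) $ and $ \dot\rho $ bounded below by a positive constant; exploiting the structure \eqref{CHy} this yields a uniform-in-time bound for $ \|y(t)\|_{\M} $ restricted to $ [\rho(t),+\infty[ $, hence near the bump, which is what the argument of \cite{L} needs. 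In the second case $ x(t)-x_0(t) $ stays bounded: here the main technical work is to show that the total variation of $ y(t) $ on a fixed small interval $ [x_0(t)-\delta,x_0(t)] $ decays exponentially in $ t $; combined with the conservation of $ M(u)=\|y^+(t)\|_{\M}-\|y^-(t)\|_{\M} $ (the two parts having disjoint supports by Hypothesis \ref{hyp}) and with \eqref{yL1} used only to initialise the iteration, this gives a uniform-in-time bound on $ \|y^-(t)\|_{\M} $, hence on $ \|y^+(t)\|_{\M} $ and on $ \|y(t)\|_{\M} $. In both cases we end up with a uniform bound on $ \|u(t)\|_{H^1}+\|y(t)\|_{\M} $, so for any $ t_n\to+\infty $ we may set $ u_n(t,x)=u(t+t_n,x+x(t+t_n)) $ and extract a subsequence with $ u_n\to\tilde u $ in $ C([-T,T];H^1_{\rm loc}) $ and $ y_n\rightharpoonup\tilde y $ in $ C_w([-T,T];\M) $ for every $ T>0 $; by weak continuity of the flow $ \tilde u\in C(\R;H^1) $ solves \eqref{CH}, and $ \tilde u(0,\cdot) $ is $ H^1 $-close to $ \varphi_c $, so $ \tilde u\not\equiv 0 $.

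The heart of the matter is then to check that $ \tilde u $ is $ Y $-almost localized with $ \tilde y\in C_w(\R;\M_+) $. The almost-monotonicity estimates above, applied on both sides of the bump and uniformly in $ n $, force the energy and the momentum of $ u_n(t) $ to concentrate near $ x=0 $ uniformly in $ n $; passing to the limit gives the localization inequality \eqref{defloc} for $ \tilde u $. For the sign of $ \tilde y $: in the first case of the dichotomy $ x_0(t+t_n)-x(t+t_n)\to-\infty $, so on every compact set $ \tilde y $ is a weak-$\ast$ limit of non-negative measures, hence $ \tilde y\ge 0 $; in the second case I would upgrade the exponential-in-$t$ decay of the total variation of $ y(t) $ to the left of $ x_0(t) $ so that it holds on $ [x_0(t)-\kappa t,x_0(t)] $ for a small $ \kappa>0 $, which ensures that in the limit no negative mass survives on any compact set, again $ \tilde y\ge 0 $. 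Thus $ \tilde u $ satisfies the hypotheses of Theorem \ref{liouville}, so $ \tilde u(t)=c^*\varphi(\cdot-x_0^*-c^*t) $ for some $ c^*>0 $ and $ x_0^*\in\R $; by weak $ H^1 $ lower semicontinuity of the energy together with the orbital-stability bound one gets $ |c^*-c|\ll c $.

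Finally, since the speed $ c^* $ is pinned by conserved-quantity limits of $ u $ and hence does not depend on the sequence $ t_n $, the whole family $ u(t,\cdot+x(t)) $ converges weakly in $ H^1 $ to $ \varphi_{c^*} $ as $ t\to+\infty $, and a standard modulation computation gives $ \dot x(t)\to c^* $. The strong statement \eqref{cvforte} then follows by combining this weak convergence with the localization estimates: outside $ |x|>\theta t $ the $ H^1 $-mass of $ u(t,\cdot+x(t))-\varphi_{c^*} $ is bounded by the $ H^1 $-mass of $ u(t) $ far from the bump, which tends to $ 0 $ by the almost-monotonicity estimates and the concentration of the limit, plus the exponentially small tail of $ \varphi_{c^*} $ on $ |x|>\theta t $. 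The step I expect to be the real obstacle is the second case of the dichotomy — proving the exponential-in-time decay of the total variation of $ y(t) $ just to the left of $ x_0(t) $ and then propagating it to an interval growing linearly in $ t $ — because there one has no a priori uniform bound on $ \|y(t)\|_{\M} $ and must squeeze everything out of the transport equation \eqref{CHy} and the smallness of $ u $ near the bump.
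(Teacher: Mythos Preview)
Your overall architecture matches the paper's closely: the dichotomy on $x(t)-x_0(t)$, the almost-monotonicity to bound $y$ near the bump in Case~1, the exponential decay of the total variation of $y$ on $[x_0(t)-\delta,x_0(t)]$ in Case~2 (then propagated to a linearly growing interval to force $\tilde y\ge 0$), and the appeal to Theorem~\ref{liouville} once the limit object is shown to lie in $Y_+$ and to be $Y$-almost localized. Two points of precision, however. First, taking $x(t)$ to be a point of maximum does not yield a $C^1$ function; the paper modulates via the implicit function theorem with the smoothed orthogonality $\int u(t)\,(\rho_{n_0}\!\ast\varphi')(\cdot-x(t))=0$, and it is precisely this orthogonality, together with \eqref{unic}, that pins the limit translate to $x_0=0$ and identifies $c^*=\lim_k\max_\R u(t_{n_k})$ --- not a ``conserved-quantity'' argument. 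Second, in Case~1 one does \emph{not} get a global uniform bound on $\|y(t)\|_{\M}$, only a bound on $y(t)$ restricted to $[x(t)-\tfrac{c}{8}(t-t_*),+\infty[$; the paper therefore passes to the limit with the truncation $\breve u(t)=u(t)\,\Phi(\cdot-x(t)+\tfrac{c}{8}(t-t_*))$. In Case~2 your logic is inverted: the exponential decay on $[x_0(t)-\delta,x_0(t)]$ controls only the \emph{flux} in $\frac{d}{dt}\int y\,\Phi(\cdot-x_0(t))$, which gives a uniform bound on $\|y^+(t)\|_{\M}$ first, and only then does conservation of $M$ bound $\|y^-(t)\|_{\M}$.

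The genuine gap is your treatment of \eqref{cvforte} on the left half-line $\{x<-\theta t\}$. Lemma~\ref{almostdecay} only controls functionals whose cutoff $z^{\mp R}_{t_0}$ travels to the \emph{right} at a positive fraction of $\dot x$; it says nothing about the $H^1$-mass to the left of a point moving to the \emph{left} at speed $\theta$. The paper stresses that this left-side convergence was \emph{not} in \cite{L} and proves a separate monotonicity: the functional $\Lambda_z(t)=\int(u^2+u_x^2)\,\Psi(\cdot+\tfrac{\theta}{2}t-z)$ is non-increasing in $t$. The key input here is not a decay or localization estimate but the pointwise lower bound $u(t,\cdot)\ge -\theta/8$, which follows from orbital stability, $\varphi\ge 0$, and \eqref{sobo}; this sign makes the cubic terms $\int u\,u_x^2\,\Psi'$ and $\int u\,h\,\Psi'$ (with $h=(1-\partial_x^2)^{-1}(u^2+u_x^2/2)\ge 0$) of the right sign so that they are absorbed by the drift term. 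Your sketch (``$H^1$-mass far from the bump tends to $0$ by the almost-monotonicity estimates'') misses this mechanism. Note also that $\{x>\theta t\}$ \emph{contains} the bump since $\theta<c^*$, so the right-side strong convergence is not a tail estimate either; it needs the argument of \cite{L}, Section~5.
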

\begin{remark}
We emphasize that \eqref{cvforte} gives a strong $ H^1 $-convergence result at the left of the line $ x=-\theta t $. This is due to the fact that, contrary to KdV type equation, there is no small linear waves travelling to the left for the Camassa-Holm equation. In the appendix we even show that in the case of a non negative momentum density, all the energy is traveling to the right.
\end{remark}
Combining this asymptotic stability result with the one obtained in \cite{L}  and the orbital stability of well ordered trains of antipeakons-peakons proven in \cite{EL3}, we are able to prove the asymptotic stability of such trains that contains, as a particular case, the asymptotic stability of the antipeakon-peakon profile.

\begin{theorem}\label{asympt-mult-peaks}
Let be given $ N_- $ negative   velocities $ c_{-N_-} <..<c_{-2}<c_{-1}<0 $, $ N_+ $ positive velocities $0<c_1<c_2<..<c_{N_+} $ and $ 0<\theta_0<\min(|c_{-1}|,c_1)/4 $.
There exist   $ L_0>0 $
 and $ \varepsilon_0>0 $ such that if  the initial data  $ u_0\in Y $ satisfies Hypothesis 1 with 
 \begin{equation}
 \|u_0-\sum_{j=1}^{N_-} \varphi_{c_{-j}}(\cdot-z_{-j}^0)-\sum_{j=1}^{N_+} \varphi_{c_j}(\cdot-z_j^0) \|_{H^1} \le \varepsilon_0^2 ,\label{inini}
 \end{equation}
 for some 
 $$ z^0_{N_-}<\cdot\cdot<z^0_{-1}<z^0_1<\cdot\cdot<z^0_{N_+} \text{ with } |z^0_j-z^0_q| \ge L_0 \text{ for }   j\neq q \; ,
 $$
 then there exist $c_{-N_-}^*<..<c_{-2}^*<c_{-1}^*<0< c_1^*<..<c_{N_+}^*  $ 
  with $ |c_j^*-c_j|\ll |c_j| $ and $ C^1$-functions $t\mapsto x_j(t), $  with  $ \dot{x}_j(t) \to c_j^* $ as $ t\to +\infty $, 
 $ j\in [[-N_-,N_+]]/\{0\} $,  such that the solution $ u \in C(\R_+;H^1(\R)) $ of \eqref{CH} emanating from $ u_0 $ satisfies 
\begin{equation}\label{mul1}
u(\cdot+x_j(t)) \weaktendsto{t\to +\infty} \varphi_{c_j^*}  \mbox{ in } H^1(\R), \; \forall j\in [[-N_-,N_+]]/\{0\} \; .
\end{equation}

 Moreover, 
\begin{equation}\label{mul2}
u-\sum_{j=1}^{N_-} \varphi_{c_{-j}^*}(\cdot-x_{-j}(t))-\sum_{j=1}^{N_+} \varphi_{c_j^*}(\cdot-x_j(t)) \tendsto{t\to +\infty} 0 \mbox{ in } H^1\Bigl(|x|>\theta_0 t\Bigr)\; .
\end{equation}
 \end{theorem}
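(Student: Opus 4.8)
The plan is to follow the Martel--Merle strategy for multi-solitons, combining the orbital stability of well-ordered antipeakon-peakon trains (\cite{EL3}), a modulation argument, the localized almost-monotonicity of $E(\cdot)+M(\cdot)$, the rigidity Theorem~\ref{liouville}, and the reflection symmetry $u(t,x)\mapsto -u(t,-x)$, which preserves both \eqref{CH} and Hypothesis~\ref{hyp} while exchanging peakons and antipeakons (with opposite velocities). First I would set the stage: by \cite{E}, $u$ is global in $Y$ and Hypothesis~\ref{hyp} propagates, so there is a $C^1$ curve $x_0(\cdot)$ with $\supp y^-(t)\subset\,]-\infty,x_0(t)]$ and $\supp y^+(t)\subset[x_0(t),+\infty[$. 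Feeding \cite{EL3} into the usual modulation machinery, for $\varepsilon_0$ small and $L_0$ large one obtains $C^1$ parameters $x_{-N_-}(t)<\cdots<x_{-1}(t)<x_1(t)<\cdots<x_{N_+}(t)$ and local speeds $c_j(t)$ with $\|u(t)-\sum_j\varphi_{c_j(t)}(\cdot-x_j(t))\|_{H^1}\lesssim\varepsilon_0$, $|c_j(t)-c_j|\ll|c_j|$, and, the $c_j$ being strictly increasing, $\dot x_j(t)\approx c_j$ together with $x_{j+1}(t)-x_j(t)\ge L_0/2+\delta_0 t$ for some $\delta_0>0$. Moreover $x_{-1}(t)-C\le x_0(t)\le x_1(t)+C$ for a universal $C$: otherwise Hypothesis~\ref{hyp} would force the momentum of $u$ on a fixed window around $x_{\pm1}(t)$ to be one-signed, in contradiction with its proximity to the momentum of $\varphi_{c_{\pm1}}$; hence $x_j(t)-x_0(t)\to+\infty$ for $j\ge2$ and $x_0(t)-x_j(t)\to+\infty$ for $j\le-2$.

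Next I would build the asymptotic objects. Fix a bump index $j$ and a sequence $t_n\to+\infty$. Along a subsequence, $u(t_n,\cdot+x_j(t_n))\rightharpoonup\tilde u_{0,j}$ in $H^1$ and $y(t_n,\cdot+x_j(t_n))$ converges weakly-$\ast$ in $\M$ (a priori only as a locally finite measure) to $\tilde y_{0,j}$; since the remaining bumps recede, $\|\tilde u_{0,j}-\varphi_{c_j}\|_{H^1}\lesssim\varepsilon_0$. Applying continuous dependence to the translated solutions $s\mapsto u(t_n+s,\cdot+x_j(t_n))$ on bounded $s$-intervals, $\tilde u_{0,j}$ generates a solution $\tilde u_j\in C(\R;H^1)$ with $\tilde u_j-(\tilde u_j)_{xx}\in C_w(\R;\M)$, defined for all $t\in\R$ as a limit of these translated solutions on growing intervals, and which by the orbital stability of a single (anti)peakon stays $O(\varepsilon_0)$-close to $\varphi_{c_j}$ for all $t\in\R$. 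The decisive step, and the \emph{main obstacle}, is to prove that $\tilde u_j$ is $Y$-almost localized in the sense of Definition~\ref{defYlocalized}. As in the proof of Theorem~\ref{asympstab} this rests on the localized almost-monotonicity of $E(\cdot)+M(\cdot)$ across points moving slightly faster or slower than the $j$-th bump, between $x_{j-1}(\cdot)$ and $x_{j+1}(\cdot)$ (and, for the $\pm1$ bumps, between $x_0(\cdot)$ and the bump): the mass and energy carried by $u$ in a window attached to $x_j(\cdot)$ is almost non-increasing or non-decreasing, so its oscillation over $[t_n,+\infty)$ tends to $0$, which in the limit forbids any mass or energy of $\tilde u_j$ from escaping, i.e.\ gives \eqref{defloc}. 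Since under Hypothesis~\ref{hyp} one has only an exponential bound on $\|y(t)\|_{\M}$, one must also import from the proof of Theorem~\ref{asympstab} the uniform-in-time bound on the total variation of $y(t)$ on those windows --- obtained near the $\pm1$ bumps from the exponential decay of the excess negative momentum just left of $x_0(\cdot)$ together with the conservation of $M$ along the trajectory --- which in particular makes $\tilde y_{0,j}$ a finite measure; near the $|j|\ge2$ bumps the momentum density is eventually one-signed on the relevant windows and the arguments of \cite{L} apply there directly.

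I would then identify each $\tilde u_j$. For $j\ge2$, since $x_0(t_n+s)-x_j(t_n)\to-\infty$ for every fixed $s$, the translated momentum densities have their negative part supported arbitrarily far to the left, so $\tilde u_j-(\tilde u_j)_{xx}\in C_w(\R;\M_+)$; being $Y$-almost localized and not identically zero, $\tilde u_j$ is then a peakon by Theorem~\ref{liouville}, i.e.\ $\tilde u_j(t)=c_j^*\varphi(\cdot-x^*-c_j^*t)$ with $c_j^*>0$. For $j\le-2$ the same applied to $-\tilde u_j(t,-\cdot)$ gives an antipeakon. For $j=\pm1$, pass to a further subsequence so that $x_0(t_n)-x_{\pm1}(t_n)$ converges in $[-\infty,C]$; if the limit is $-\infty$ one concludes as above, and otherwise $\tilde u_{0,j}$ satisfies Hypothesis~\ref{hyp}, is $O(\varepsilon_0)$-close to $\varphi_{c_j}$ and $Y$-almost localized, so the rigidity established along the proof of Theorem~\ref{asympstab} --- a $Y$-almost localized solution satisfying Hypothesis~\ref{hyp} and close to a peakon has non-negative momentum density, hence is a peakon by Theorem~\ref{liouville} --- again forces $\tilde u_j$ to be a peakon (resp.\ an antipeakon). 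After adjusting $x_j(\cdot)$ so that $x^*=0$, this shows that \emph{every} weak limit of $u(t_n,\cdot+x_j(t_n))$ along \emph{any} sequence $t_n\to+\infty$ equals $\varphi_{c_j^*}$.

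Finally I would upgrade this to the statement. The speed $c_j^*$ is sequence-independent because the localized energy in the $j$-th window converges as $t\to+\infty$ (almost-monotonicity) and determines $c_j^*$, while orbital stability pins $|c_j^*-c_j|\ll|c_j|$; for $\varepsilon_0$ small the $c_j^*$ therefore remain ordered as in the statement, and $u(\cdot+x_j(t))$ converges weakly to $\varphi_{c_j^*}$ in $H^1$ with $\dot x_j(t)\to c_j^*$, which is \eqref{mul1}. For \eqref{mul2}, the hypothesis $\theta_0<\min(|c_{-1}|,c_1)/4$ ensures that for $t$ large every window around $x_j(t)$ with $j\ge1$ lies well inside $\{x>\theta_0 t\}$ and every one with $j\le-1$ well inside $\{x<-\theta_0 t\}$: near each bump, combining the weak convergence with the absence of energy leaking from the window (almost-monotonicity) upgrades it to strong $H^1$-convergence of $u$ to $\varphi_{c_j^*}(\cdot-x_j(t))$ on $\{|x|>\theta_0 t\}$, exactly as in the proof of \eqref{cvforte}; and on the valleys between consecutive bumps and on the far regions $x>x_{N_+}(t)$ and $x<x_{-N_-}(t)$ the localized energy tends to $0$ by the monotonicity estimates (for the far-left region via the reflection of the fact, proven in the appendix, that all the energy travels to the right). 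Summing the finitely many bump and gap contributions yields \eqref{mul2}.
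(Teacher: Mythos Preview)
Your strategy matches the paper's: reduce to the right-moving part by the symmetry $u(t,x)\mapsto -u(t,-x)$, use the orbital stability of \cite{EL3}, run the Martel--Merle induction from the fastest bump, and treat the $|j|\ge 2$ bumps exactly as in \cite{L} since the momentum density is one-signed on the relevant windows. The paper's own proof of this theorem is a short sketch along the same lines, and your proposal is in fact more detailed than what is written there.

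There is one imprecision in your identification step for $j=\pm 1$. In the ``otherwise'' case you assert that ``a $Y$-almost localized solution satisfying Hypothesis~\ref{hyp} and close to a peakon has non-negative momentum density'' and attribute this to the proof of Theorem~\ref{asympstab}. That is not what is proved there: Proposition~\ref{propasym} shows that the \emph{asymptotic object extracted from the original solution} $u$ lies in $Y_+$, not that an already-given $Y$-almost localized solution satisfying Hypothesis~\ref{hyp} must have non-negative momentum. The latter statement, as you phrase it, would require an additional argument (e.g.\ ruling out that $\tilde y_1^-$ is nonzero for some finite time even though $\|\tilde y_1^-(t)\|_{\M}\to 0$ as $t\to+\infty$), which is neither written in the paper nor immediate.

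The fix is already in your hands. You correctly imported from Section~4 the decay estimate \eqref{claimy2}, namely $\|y^-(t)\|_{\M(]x_1(t)-\frac{c}{96}t,+\infty[)}\to 0$, which in Case~2 is proved for the \emph{original} $u$. Evaluated at $t=t_n$, this forces the negative part of $y(t_n,\cdot+x_1(t_n))$ to vanish on every compact set, so the weak-$\ast$ limit $\tilde y_{0,1}$ is automatically non-negative; then Theorem~\ref{liouville} applies directly. This is exactly the route the paper takes (``By separating the two same cases as in the preceding section\ldots we obtain in the same way an asymptotic object that gives rise to an $Y_+$ almost localized solution''), and it avoids passing through any property of the limit object beyond $Y$-almost localization. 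With this adjustment your argument goes through.
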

This paper is organized as follows : in the next section we recall the well-posedness results for the class of solutions we will work with. In Section 3, 
we derive an almost monotonicity result that is a straightforward adaptation  of the one proven in \cite{L}. Section 4 is devoted to the proof of the $ Y$-almost localization  of the elements of the $ \omega$-limit set of  the orbits associated with initial data  satisfying Hypothesis 1 and being $ H^1$-close enough to a peakon. This is the main new contribution of this paper. 
In Section 5 we deduce  the asymptotic stability results. Finally  in the appendix we present  an improvement of the asymptotic  stability result given in \cite{L} by noticing that all the energy of any  solution to \eqref{CH} with a non negative density momentum is traveling to the right. Moreover, for such a solution, the energy at the the left of any given point  decays to zero 	as $ t \to +\infty $. 
\section{Global well-posedness results} 
We first recall some obvious estimates that will be useful in the sequel of this paper.  
Noticing that $ p(x)=\frac{1}{2}e^{-|x|} $ satisfies $p\ast y=(1-\partial_x^2)^{-1} y $ for any $y\in H^{-1}(\R) $ we easily get
$$
\|u\|_{W^{1,1}}=\|p\ast (u-u_{xx}) \|_{W^{1,1}}\lesssim \| u-u_{xx}\|_{\M}
$$
and 
$$
\|u_{xx}\|_{\M}\le \|u\|_{L^1}+ \|u-u_{xx} \|_{\M} 
$$
 which ensures  that 
\begin{equation} \label{bv}
Y\hookrightarrow  \{ u\in W^{1,1}(\R) \mbox{ with } u_x\in {\mathcal BV}(\R) \} \; .
\end{equation}
 It is also worth noticing that  for $ v\in C^\infty_0(\R) $, satisfying Hypothesis \ref{hyp},
\begin{equation}\label{formulev}
v(x)=\frac{1}{2} \int_{-\infty}^x e^{x'-x} (v-v_{xx})(x') dx' +\frac{1}{2} \int_x^{+\infty} e^{x-x'} (v-v_{xx})(x') dx'
\end{equation}
and 
$$
v_x(x)=-\frac{1}{2}\int_{-\infty}^x e^{x'-x} (v-v_{xx})(x') dx' + \frac{1}{2}\int_x^{+\infty} e^{x-x'} (v-v_{xx})(x') dx' \; ,
$$
so that for $ x\le x_0 $ we get 
$$
v_x(x) =v(x)-e^{-x} \int_{-\infty}^x e^{x'} y(x') \, dx' \ge v(x)
$$
whereas for $ x\ge x_0 $ we get 
$$
v_x(x) =-v(x)+e^{x} \int^{+\infty}_x e^{-x'} y(x') \, dx' \ge -v(x)
$$
Throughout this paper, we will denote $ \{\rho_n\}_{n\ge 1} $ the mollifiers defined by 
\begin{equation} \label{rho}
\rho_n=\Bigl(\int_{\R} \rho(\xi) \, d\xi 
\Bigr)^{-1} n \rho(n\cdot ) \mbox{ with } \rho(x)=\left\{ 
 \begin{array}{lcl} e^{1/(x^2-1)} & \mbox{for} & |x|<1 \\
0 & \mbox{for} & |x|\ge 1 
\end{array}
\right.
\end{equation}
Following \cite{E} we approximate $ v\in Y $ satisfying  Hypothesis \ref{hyp} by the  sequence of functions
\begin{equation}\label{app}
v_n=  p\ast y_n \;\text{with}\;y_n=-(\rho_n\ast y^-)(\cdot+\frac{1}{n})+(\rho_n\ast y^+)(\cdot-\frac{1}{n})
 \text{ and } y=v-v_{xx}
\end{equation}
that belong to $Y\cap H^\infty(\R) $  and satisfy Hypothesis \ref{hyp} with the same $ x_0 $.
It is not too hard to check that 
\begin{equation}\label{estyn}
\|y_n\|_{L^1} \le \|y\|_{\mathcal M} 
\end{equation}
Moreover,  noticing that 
$$
v_n  = -\Bigl( \rho_n \ast (p \ast y^-)\Bigr) (\cdot+\frac{1}{n})+\Bigl(\rho_n\ast (p\ast  y^+)\Bigr)(\cdot-\frac{1}{n} )
$$
with  $ p\ast y^{\mp} \in H^1(\R) \cap W^{1,1}(\R) $,  we infer that 
\begin{equation}\label{mm}
v_n \to v \in H^1(\R) \cap W^{1,1}(\R) \; .
\end{equation}
that ensures that for any $ v\in Y $ satisfying  Hypothesis \ref{hyp} it holds
\begin{equation}\label{dodo}
 v_x \ge v \text{ on } ]-\infty, x_0[\quad  \text{ and } \quad  v_x\ge -v \text{ on } ]x_0,+\infty[   \; .
\end{equation}
 \begin{proposition}{(Global weak solution \cite{E})}\label{prop1} \\
 Let $ u_0 \in Y $ satisfying Hypothesis \ref{hyp}  for some 
  $ x_0\in\R $.  \vspace*{2mm} \\
 {\bf 1. Uniqueness and global existence :} \eqref{CH} has a unique solution 
 $$
  {u\in C(\R_+;H^1(\R)) \cap C^1(\R_+;L^2(\R))}
  $$
   such that 
  $ y=(1-\partial_x^2)u \in C_{ti}(\R_+; \M) $.  $E(u) $, $ F(u) $ and $ M(u)=\dist{y}{1} $ are conservation laws . Moreover, for any $ t\in \R_+$, the density momentum 
   $ y(t) $ satisfies   $\supp y^{-}(t)\subset ]-\infty,q(t,x_0)] $ and $\supp y^+(t) \subset [q(t,x_0),+\infty[ $ where $ q(\cdot,\cdot)$ is defined by 
 \begin{equation}\label{defq}
  \left\{ 
  \begin{array}{rcll}
  q_t (t,x) & = & u(t,q(t,x))\, &, \; (t,x)\in \R^2\\
  q(0,x) & =& x\, & , \; x\in\R \; 
  \end{array}
  \right. .
  \end{equation} 
  
   \vspace*{2mm} \\
  {\bf 2. Continuity with respect to  the $ H^1$-norm}: For any sequence $ \{u_{0,n}\} $ bounded in $ Y $ that satisfy Hypothesis \ref{hyp} and such that 
    $ u_{0,n} \to u_0 $ in $ H^1(\R ) $,  the emanating sequence of solutions $ \{u_n\} \subset  C^1(\R_+;L^2(\R))\cap C(\R_+;H^1(\R)) $ satisfies for any $ T>0 $
  \begin{equation}\label{cont1}
  u_n \to u \mbox{ in } C([0,T]; H^1(\R)) \; . \end{equation}
 Moreover, if $ \{u_{0,n}\}  $ is the sequence defined by \eqref{app} then 
    \begin{equation}\label{cont2}
 (1-\partial_x^2) u_n  \rightharpoonup  \! \ast \; y \mbox{ in } C_{ti} ([0,T], \M) \; . 
   \end{equation}
       {\bf 3. Continuity with respect to initial data  in $Y$ equipped with its weak topology:}  Let  $ \{u_{0,n}\} $ be a bounded   sequence of  $ Y $ such that 
    $ u_{0,n} \rightharpoonup u_0 $ in $ H^1(\R ) $ and  such that the emanating sequence of solution $ \{u_n\} $ is bounded in
      $  C( [-T_-,T_+];H^1)\cap L^\infty(-T_-,T_+; Y) $  for some $(T_-,T_+)\in (\R_+)^2 $. Then the solution $u$ of \eqref{CH} emanating from $ u_0 $ belongs to $  C( [-T_-,T_+];H^1)$ with $ y=u-u_{xx}\in  C_{w}([-T_-,T_+]; {\mathcal M})$. Moreover, 
       \begin{equation}\label{weakcont}
  u_n \weaktendsto{n\to\infty} u \mbox{ in } C_{w}([-T_-,T_+]; H^1(\R) ) \; ,
  \end{equation}
  and 
     \begin{equation}\label{cont22}
 (1-\partial_x^2) u_n  \rightharpoonup  \! \ast \; y \mbox{ in } C_{w} ([-T_-,T_+], \M) \; . 
   \end{equation}
  \end{proposition}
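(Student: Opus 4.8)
The plan is to follow the construction of \cite{E}, which realizes the solution as the limit of the smooth approximations $u_n = p\ast y_n$ defined in \eqref{app}, and to supplement it with the two continuity statements that are specific to our setting, namely the tight convergence \eqref{cont2} and the weak continuity in Part 3. First, on the smooth level: each $v_n$ in \eqref{app} lies in $Y\cap H^\infty(\R)$ and satisfies Hypothesis \ref{hyp} with the same $x_0$, so Kato's theory provides a local solution $u_n$, and $y_n = u_n-u_{n,xx}$ solves the transport equation \eqref{CHy}. Integrating along the flow \eqref{defq} of $u_n$ shows that the sign of $y_n$ is preserved and, because characteristics do not cross, $\supp y_n^-(t)\subset\,]-\infty, q_n(t,x_0)]$ and $\supp y_n^+(t)\subset[q_n(t,x_0),+\infty[$. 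Conservation of $E(u_n)$ gives a uniform $H^1$ bound; together with $\|u_{n,x}\|_{L^\infty}\lesssim\|u_n\|_{H^1}$ and the transport structure one gets $\frac{d}{dt}\|y_n(t)\|_{L^1}\lesssim\|u_n(t)\|_{H^1}\|y_n(t)\|_{L^1}$, which with \eqref{estyn} yields $\|y_n(t)\|_{L^1}\le\|y_0\|_{\M}\,e^{C|t|}$. Hence the $u_n$ are global and bounded in $C([-T,T];H^1)\cap L^\infty(-T,T;Y)$ for every $T$.

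Then, for Part 1, the uniform bounds together with the equicontinuity in time inherited from \eqref{CH} (which controls $\partial_t u_n$ in $C([-T,T];L^2)$) allow one, after extracting a subsequence, to pass to the limit and obtain $u\in C(\R_+;H^1)\cap C^1(\R_+;L^2)$ with $y = u-u_{xx}\in C_{ti}(\R_+;\M)$ solving \eqref{CH}; the support and sign information passes to the limit, and $E$, $F$, $M(u)=\dist{y}{1}$ are conserved because they are along the $u_n$ and the convergence is strong in $H^1$ and weak-$*$ on $y$. Uniqueness is the delicate point and is obtained as in \cite{E} by comparing two solutions at the Lagrangian level, using that on $]-\infty,x_0(t)]$ one has $u_x\ge u$ and on $[x_0(t),+\infty[$ one has $u_x\ge -u$ — this is \eqref{dodo} propagated in time — which plays the role of the one-sided Lipschitz bound available in the non-negative case.

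Finally, the continuity statements. For Part 2: if $\{u_{0,n}\}$ is bounded in $Y$, satisfies Hypothesis \ref{hyp}, and converges to $u_0$ in $H^1$, the emanating solutions are bounded in $C([0,T];H^1)\cap L^\infty(0,T;Y)$, so by uniqueness every weak limit equals $u$; strong convergence \eqref{cont1} then follows from conservation of $E$ plus weak $H^1$-convergence (norm convergence together with weak convergence in a Hilbert space gives strong convergence), uniformly in $t$ by the time-equicontinuity. The tight convergence \eqref{cont2} for the canonical sequence \eqref{app} follows because $y_{0,n}\rightharpoonup y_0$ tightly and the approximate dynamics propagate tightness: no mass of $y_n$ escapes to infinity since the outer characteristics have bounded speed and $\|y_n(t)\|_{\M}$ is finite, so splitting $\phi\in C_b(\R)$ as $\phi\chi_{[-R,R]}+\phi(1-\chi_{[-R,R]})$ with $R$ large reduces the claim to weak-$*$ convergence on compact sets. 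Part 3 is handled similarly: given $u_{0,n}\rightharpoonup u_0$ in $H^1$ with the emanating solutions bounded in $C([-T_-,T_+];H^1)\cap L^\infty(-T_-,T_+;Y)$, weak-$*$ compactness in $\M$ and time-equicontinuity produce a subsequential limit $\tilde u$ with $\tilde y\in C_w([-T_-,T_+];\M)$; the compact embedding $H^1\hookrightarrow L^2_{loc}$ upgrades this to strong convergence on compact space-time sets, which is enough to pass to the limit in $u_n u_{n,x}$ and in the nonlocal term, so $\tilde u$ solves \eqref{CH} with datum $u_0$ and $\tilde u = u$ by uniqueness; the full sequence then converges, giving \eqref{weakcont} and \eqref{cont22}.

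The main obstacle is the uniqueness in Part 1, which cannot be reached by a naive Gr\"onwall argument: in the absence of a uniform-in-time bound on $\|y(t)\|_{\M}$ the Lipschitz constant of $u$ is only controlled exponentially, so one must work on the Lagrangian flow and keep track of the sign of $y^\pm$ on either side of $q(t,x_0)$. A secondary difficulty, in Parts 2 and 3, is to ensure that the weak-$*$ limit of $y_n$ does not lose mass at spatial infinity, which is precisely where the assumption that $\{u_n\}$ is bounded in $L^\infty(Y)$ enters.
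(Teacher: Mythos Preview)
Your overall architecture matches the paper's: approximate via \eqref{app}, use the transport structure \eqref{CHy} together with \eqref{dodo} to get the exponential bound on $\|y_n(t)\|_{L^1}$, pass to the limit by compactness, and identify the limit via uniqueness (which both you and the paper defer to \cite{E}). The argument for \eqref{cont1} --- weak limit plus conservation of $E$ yielding strong $H^1$-convergence --- is the same.

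The one genuine divergence is your proof of the tight convergence \eqref{cont2}. The paper does not argue tightness directly. Instead it invokes the $W^{1,1}$-Lipschitz estimate
\[
\|u^1-u^2\|_{L^\infty(0,T;W^{1,1})}\lesssim e^{6MT}\|u_0^1-u_0^2\|_{W^{1,1}},
\qquad M=\sum_{i}\|u^i-u^i_{xx}\|_{L^\infty(0,T;\M)},
\]
from \cite{CM1}, \cite{E}, so that the canonical sequence $u_n$ is Cauchy in $C([0,T];W^{1,1})$. Tight convergence of $y_n$ then falls out of an integration by parts: for $\phi\in C^1_b$,
\[
\int y_n\,\phi=\int\bigl(u_n\phi+u_{n,x}\phi'\bigr)\longrightarrow\int\bigl(u\phi+u_x\phi'\bigr)=\langle y,\phi\rangle,
\]
with the uniform-in-time statement coming from Arzel\`a--Ascoli. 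This is shorter and sidesteps any tracking of mass at infinity.

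Your characteristic-based tightness argument can be made to work, but as written it is incomplete: bounded characteristic speed only tells you that mass outside $[-R-Ct,R+Ct]$ at time $t$ originates from outside $[-R,R]$ at time $0$; it does not by itself prevent that mass from being amplified. You need in addition the lower bound $u_{n,x}\ge -\|u_n\|_{L^\infty}$ on all of $\R$ (which does follow from \eqref{dodo}, on each side of $x_{0,n}(t)$ separately) to control $q_{n,x}^{-1}\le e^{Ct}$ and hence bound the amplification factor. Once you add that line, your argument goes through; the paper's $W^{1,1}$ route simply avoids the issue altogether. For Part~3 the two approaches coincide.
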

  \begin{proof}
    For sake of completeness let us recall that the global existence result follows from the following estimate on the density momentum $ y $  of smooth solutions with initial data
   $ u_0 \in Y $  that
   satisfies Hypothesis \ref{hyp} : 
   \begin{eqnarray*}
   \frac{d}{dt}\int_{\R} y_+(t,x)\, dx &=& \frac{d}{dt} \int_{q(t,x_0)}^{+\infty} y(t,x) \, dx \\
    & = &- \int_{q(t,x_0)}^{+\infty} u_x(t,x) y(t,x)  \, dx \\
     & \le & \sup_{x\in\R} (-u_x(t,x)) \int_{q(t,x_0)}^{+\infty} y(t,x) \, dx\\
     & \le &  \sup_{x\in\R} (|u(t,x|)) \int_{\R}  y^+(t,x) \, dx \\
     &\le & \sqrt{E(u_0)}  \int_{\R}  y^+(t,x) \, dx \; ,
   \end{eqnarray*}
  where one used  \eqref{CHy},  \eqref{dodo}, the conservation of the energy and the classical Sobolev inequality :
  \begin{equation}\label{sobo}
  \|v\|_{L^\infty} \le \frac{1}{\sqrt{2}} \|v\|_{H^1} , \quad \forall v\in H^1(\R) \; .
  \end{equation}
  Indeed, then by Gronwall estimate and the conservation of $ M $ one gets 
  \begin{equation}\label{yL1}
  \|y\|_{L^1} \le  2 \exp(\sqrt{E(u_0)}t) \|y_0\|_{L^1} 
  \end{equation}
  which ensures that the associated solution $ u $ can be extended for all positive times. Finally the global existence result for $ u_0\in Y $ follows by  approximating $ u_0 $ as in 
  \eqref{app} and proceeding as in \cite{CM1}.

  In this way, the uniqueness and global existence results are obtained in \cite{E} except the conservation of $ M(u) $ and the  fact that  $y$ belongs to $C_{ti}(\R+; \M)$. 
   In \cite{E}, only the fact that $ y\in L^\infty_{loc}(\R_+,\M) $ is stated. But these properties will follow directly from  \eqref{cont2}  since $ M(u) $ is a conservation law  for any smooth solution  $ u\in C(\R_+;H^3)$ with $ u-u_{xx} \in L^\infty_{loc}(\R_+;L^1(\R)) $. 
  
 To prove \eqref{cont1}, it suffices to notice that, according to the conservation of the $ H^1$-norm   and \eqref{yL1}, the sequence of emanating solution $\{u_n\} $ is bounded in  $ C(\R_+;H^1(\R))\cap L^\infty(]0,T[; W^{1,1}(\R)) $
  with $ \{u_{n,x} \} $ bounded in $ L^\infty(]0,T[; {\mathcal BV}(\R)) $,  for any $ T>0$.  Therefore, there exists $ v\in  L^\infty(\R_+;H^1(\R)) $ with $ (1-\partial_x^2) v \in 
   L^\infty_{loc}(\R_+; {\mathcal M(\R)}) $ such that, for any $ T>0$,
   $$
     u_n \weaktendsto{n\to\infty} v \in L^\infty(]0,T[; H^1(\R))  \mbox{ and }  (1-\partial_x^2) u_n \weaktendsto{n\to\infty} \hspace*{-3mm} \ast \; (1-\partial_x^2) v  \mbox{ in } L^\infty(]-T,T[; {\mathcal M}(\R))  \; .
   $$
  Moreover, in view of \eqref{CH}, $ \{\partial_t u_n\} $ is bounded in $L^\infty(]0,T[; L^2(\R) \cap L^1(\R) )$ and Helly's,  Aubin-Lions compactness and Arzela-Ascoli theorems  ensure that $ v $ is a solution to \eqref{CH} that belongs to $ C_{w}([0,T]; H^1(\R)) $ with  $ v(0)=u_0 $ and that \eqref{cont2} holds. In particular, $ v_t\in L^\infty(]0,T[; L^2(\R)) $ and thus $ v\in C([0,T];L^2(\R)) $. Since $ v\in L^\infty(]0,T[; H^{\frac{3}{2}-} (\R))$, this  actually implies that 
   $ v\in C([0,T];  H^{\frac{3}{2}-}(\R)) $. Therefore, 
 $v$  belongs to the uniqueness class which ensures that $ v=u$. The conservation of $ E(\cdot) $ and the above weak convergence results then lead to \eqref{cont1}.
 
  To prove \eqref{cont2} we use the following  $ W^{1,1} $-Lipschitz  bound that is proven in \cite{CM1} and \cite{E} : Let $ u_{0}^{i} \in Y $ for $ i=1,2 $ and let $u^i \in C([0,T];H^1) \cap L^\infty(0,T;Y) $ be the associated solution of \eqref{CH}. Setting $ M=\sum_{i=1}^2 \|u^{i}-u_{xx}^{i}\|_{L^\infty(0,T;\M)} $, it holds 
 \begin{equation}\label{lip}
 \|u^1-u^2\|_{L^\infty(0,T;W^{1,1})} \lesssim e^{6 M  T } \|u_{0}^1-u_{0}^2\|_{W^{1,1}}\; .
 \end{equation}
 Let $\{u_{0,n}\} $ the sequence defined by \eqref{app}.  The sequence of emanating solutions $\{u_n\} $ 
 is included in $C(\R_+;H^\infty) \cap C(\R_+; W^{1,1}) $. Moreover in view of \eqref{estyn}, \eqref{mm}   and \eqref{lip}, for any $ T>0 $, 
  $\{u_n\} $ is a Cauchy sequence  in $ C([0,T] ; W^{1,1}) $ and thus 
  \begin{equation}\label{convW}
  u_n \to u \in C(\R_+; W^{1,1})  \; .
   \end{equation}
   Now, we notice that for any $ v\in BV(\R) $ and any $ \phi\in C^1(\R) $, it holds 
  $$
  \langle v', \phi\rangle = -\int  v \phi' \; .
  $$
  Therefore, setting $ y_n=u_n-u_{n,xx} $,  \eqref{convW} ensures that, for any $ t\in \R_+ $,
  $$
  \int_{\R} y_n \phi =\int_{\R} (u_n \phi+u_{n,x}  \phi') \to \int_{\R}  (u \phi+u_{x}  \phi')
  = \langle y, \phi \rangle 
  $$
  and thus  $  y_n(t) \rightharpoonup  \! \ast \; y(t) $ tightly in $\M$. Using again that $ \{\partial_t u_{n}\} $ is bounded in 
   $ L^\infty(0,T;L^1) $, Arzela-Ascoli theorem leads then  to \eqref{cont2}.
     Finally \eqref{weakcont}-\eqref{cont22} can be proven exactly in the same way, since   $ u_0\in Y $ and $ \{u_{n}\} $ is bounded in $L^\infty(]-T_-,T_+[; Y) $  by  hypotheses.   
  \end{proof}

   \section{Monotonicity results}
   We have to prove a monotonicity result for our solutions. The novelty with respect to the monotonicity result proven in \cite{L} is that we only require the momentum density to be non negative  at the right of some curve. This is possible since, the differential equation satisfied by $ y $ being local and of order 1, we may test $ y $ with a  test function $ \Phi$ that vanishes on $ \R_-$. Note that this is not possible to use such test function for the energy since the non local term contained in the  time derivative of the energy density imposes to require a condition similar to  \eqref{po} on the test function.
   
As in \cite{MM2}, we introduce the $ C^\infty $-function $ \Psi $ defined  on $ \R $ by 
\begin{equation}\label{defPsi}
\Psi(x) =\frac{2}{\pi} \arctan \Bigl( \exp(x/6)\Bigr) 
\end{equation}
It is easy to check that $ \Psi(-\cdot)=1-\Psi $ on $ \R $, $ \Psi' $ is a  positive even  function and that 
 there exists $C>0 $ such that $ \forall x\le 0 $, 
\begin{equation}\label{psipsi}
|\Psi(x)| + |\Psi'(x)|\le C \exp(x/6) \; .
\end{equation}
Moreover, by direct calculations, it is easy to check that 
\begin{equation}\label{psi3}
|\Psi^{'''}| \le  \frac{1}{2} \Psi' \;\text{on }\R 
\end{equation}
and that 
\begin{equation} \label{po}
\Psi'(x)\ge \Psi'(2)= \frac{1}{3\pi} \frac{e^{1/3}}{1+e^{2/3}} , \quad \forall x\in [0,2] \; .
\end{equation}
We also introduce the function $ \Phi $ defined by 
\begin{equation}\label{defPhi}
\Phi(x) =\left\{ \begin{array}{rcl}
0 &\text{for}& x\le 0\\
x/2  &\text{for}& x\in [0,2] \\
1&\text{for}& x\ge 1
\end{array}
\right.
\end{equation}
\begin{lemma}\label{almostdecay}
  Let $0<\alpha < 1 $ and let $ u\in C(\R;H^1) $ with $ y=(1-\partial_x^2)u \in C_{w}(\R; \M+) $ be a solution of \eqref{CH}, emanating from an initial datum 
  $u_0\in Y$ that satisfies Hypothesis \ref{hyp},  such that there exist $x\,:\, \R_+\to \R $ of class $ C^1 $ with 
   $ \inf_{\R} \dot{x}\ge c_0>0$  and $ R_0>0 $ with
 \begin{equation}\label{loc}
 \|u(t)\|_{L^\infty(|x-x(t)|>R_0)} \le \frac{(1-\alpha) c_0}{2^6} \, , \; \forall t\in\R_+ .
 \end{equation}
  For $ 0<\beta \le \alpha $, $ 0\le \gamma\le \frac{1}{3\pi (1+e^{2/3})} (1-\alpha) c_0 $, $ R>0 $, $ t_0\in\R_+ $  and any $ C^1 $-function 
   $ z\, :\, \R_+ \to \R $ such that 
  \begin{equation}\label{condz}
 \quad (1-\alpha)  \dot{x}(t) \le \dot{z}(t) \le (1-\beta) \dot{x}(t), \quad \forall t\in \R_+,
   \end{equation}
    we set 
     \begin{equation}\label{defI}
 I^{\mp R}_{t_0} (t)=\dist{u^2(t)+u_x^2(t)}{\Psi\Bigl(\cdot- z_{t_0}^{\mp R}(t)\Bigr)}+\gamma \dist{y(t)}{\Phi\Bigl(\cdot- z_{t_0}^{\mp R}(t)\Bigr)},
 \end{equation}
   where 
     \begin{equation}\label{condz2}
z_{t_0}^{\mp R}(t)=x(t_0)\mp R +z(t)-z(t_0)
   \end{equation}
   Let  also $ q(\cdot,\cdot) $ be defined as in  \eqref{defq}.
   Then if 
   \begin{equation}\label{zo1}
    z_{t_0}^{+R}(t) \ge q(t,x_0) ,\text{  for } 0\le t \le t_0  \; , 
    \end{equation}
     it holds 
      \begin{equation}
I^{+R}_{t_0}(t_0)-I^{+R}_{t_0}(t)\le K_0 e^{-R/6} , \quad \forall  0\le t\le t_0 \quad  \label{mono}
\end{equation}
whereas if 
  \begin{equation}\label{zo2}
   z_{t_0}^{-R}(t) \ge q(t,x_0)  ,\text{  for } t\ge t_0 
     \end{equation}
      it holds 
 \begin{equation}
I^{-R}_{t_0}(t)-I^{-R}_{t_0}(t_0)\le K_0 e^{-R/6} , \quad \forall t\ge t_0 \quad , \label{mono2}
\end{equation}
for some constant $ K_0>0 $ that only depends on $ E(u) $, $c_0$, $R_0$ and $ \beta$. 
\end{lemma}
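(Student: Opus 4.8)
The plan is to follow the classical Martel--Merle almost-monotonicity scheme, computing the time derivative of $I^{\mp R}_{t_0}(t)$ along smooth solutions and then passing to the limit using the continuity statements of Proposition~\ref{prop1}. First I would reduce to the case where $u$ is smooth (say $u\in C(\R_+;H^\infty)$ with $y\in C(\R_+;L^1)$, obtained by the approximation \eqref{app}): all the quantities in \eqref{defI} are continuous under the convergences \eqref{cont1}--\eqref{cont2}, and the support conditions on $y^{\mp}$ are preserved along the approximation, so it suffices to prove \eqref{mono}--\eqref{mono2} with $K_0$ independent of the approximation parameter, then take limits. For the smooth solution I would split $I^{\mp R}_{t_0}=\mathcal E_{\mp R}+\gamma\,\mathcal Y_{\mp R}$ where $\mathcal E$ is the energy piece tested against $\Psi(\cdot-z_{t_0}^{\mp R})$ and $\mathcal Y$ the momentum piece tested against $\Phi(\cdot-z_{t_0}^{\mp R})$.

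Next I would differentiate each piece. For the momentum term, using the transport equation \eqref{CHy} in the form $y_t=-(uy)_x-u_xy$ and integrating by parts, one gets
\[
\frac{d}{dt}\,\dist{y(t)}{\Phi(\cdot-z_{t_0}^{\mp R})}
=\dist{y(t)}{(u-\dot z_{t_0}^{\mp R})\,\Phi'(\cdot-z_{t_0}^{\mp R})}-\dist{y(t)}{u_x\,\Phi(\cdot-z_{t_0}^{\mp R})}.
\]
The key point here is that $\Phi$ and $\Phi'$ are supported in $[0,\infty)$, so that on the support of $\Phi(\cdot-z_{t_0}^{\mp R})$ and $\Phi'(\cdot-z_{t_0}^{\mp R})$ one has $x\ge z_{t_0}^{\mp R}(t)\ge q(t,x_0)$ by \eqref{zo1} (resp.\ \eqref{zo2}), hence $y(t)\ge0$ there; combined with $\dot z\le(1-\beta)\dot x$, the localization \eqref{loc}, $u_x\ge -u$ on $]q(t,x_0),+\infty[$ (from \eqref{dodo} transported, cf.\ Proposition~\ref{prop1}.1), and $\|u\|_\infty\lesssim\sqrt{E(u)}$, the dominant drift term has a favourable sign up to an exponentially small tail. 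For the energy term one uses the Hamiltonian structure: with $g=p\ast(u^2+u_x^2/2)$, a standard computation gives
\[
\frac{d}{dt}\,\dist{u^2+u_x^2}{\Psi(\cdot-z_{t_0}^{\mp R})}
=-\dot z_{t_0}^{\mp R}\dist{u^2+u_x^2}{\Psi'(\cdot-z_{t_0}^{\mp R})}
+\text{(cubic and nonlocal terms)},
\]
and the cubic/nonlocal terms are controlled exactly as in \cite{L}: the local cubic part is bounded by $\|u\|_{L^\infty(\mathrm{supp})}\dist{u^2+u_x^2}{\Psi'}$, which on the relevant support is $\le\frac{(1-\alpha)c_0}{2^6}\dist{u^2+u_x^2}{\Psi'}$ by \eqref{loc} up to an $O(e^{-R/6})$ error, using \eqref{psipsi} to estimate the part of $\Psi'$ away from the support of $u$ near the bump; the nonlocal term involving $g$ is handled with $\Psi'''\le\frac12\Psi'$ \eqref{psi3} and the same $L^\infty$ smallness. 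Collecting terms, the coefficient of $\dist{u^2+u_x^2}{\Psi'(\cdot-z_{t_0}^{\mp R})}$ becomes
\[
-\dot z+O\!\big(\tfrac{(1-\alpha)c_0}{2^6}\big)\ \le\ -(1-\alpha)\dot x+\text{(small)}\ \le\ 0
\]
by \eqref{condz} and $\dot x\ge c_0$, while the $\gamma$-weighted momentum drift, after using \eqref{po} to bound $\Phi'\le\frac1{\Psi'(2)}\Psi'$ on $[0,2]$ and the constraint $\gamma\le\frac{1}{3\pi(1+e^{2/3})}(1-\alpha)c_0$, is absorbed into this negative coefficient. Hence $\frac{d}{dt}I^{+R}_{t_0}(t)\ge -K_0 e^{-R/6}$ on $[0,t_0]$ and $\frac{d}{dt}I^{-R}_{t_0}(t)\ge -K_0 e^{-R/6}$ on $[t_0,\infty)$, with $K_0$ depending only on $E(u),c_0,R_0,\beta$; integrating gives \eqref{mono} and \eqref{mono2}.

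The main obstacle I anticipate is the bookkeeping of the exponential error terms coming from the part of the weights $\Psi',\Phi,\Psi$ that overlaps the ``bump region'' $|x-x(t)|\le R_0$, where \eqref{loc} does not give smallness of $u$. There one must use that $z_{t_0}^{\mp R}(t)$ stays at distance $\gtrsim R + c_0|t-t_0|$ (roughly) to the appropriate side of $x(t)$ — this follows from \eqref{condz2}, \eqref{condz} and $\dot x\ge c_0$ — so that $\Psi'(\cdot-z_{t_0}^{\mp R})$ evaluated on $[x(t)-R_0,x(t)+R_0]$ is $O(e^{-(R+c_0|t-t_0|)/6})$ by \eqref{psipsi}, and the time-integral of these tails converges and is $O(e^{-R/6})$. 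One has to be a little careful that the sign of $t-t_0$ and the choice of $+R$ vs.\ $-R$ match the direction in which monotonicity is claimed, and that the constant absorbs the factor $\int_0^\infty e^{-c_0 s/6}\,ds$. A secondary technical point is justifying the integration by parts and the differentiation under the duality pairing for merely $L^1$ momentum density; this is where smoothness of the approximating solutions is used, together with the uniform bounds \eqref{yL1} and \eqref{estyn} to pass to the limit. Apart from these, the computation is a direct adaptation of \cite{L}, the only genuinely new feature being the use of a test function $\Phi$ supported in $\R_+$ for the momentum term, which is legitimate precisely because \eqref{CHy} is a first-order local transport equation.
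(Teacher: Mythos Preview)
Your strategy is the paper's, and most of the ingredients are in place. Two concrete points need fixing.

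First, the term $-\langle y,u_x\Phi\rangle$ in your momentum identity cannot be handled via $u_x\ge -u$: since $\Phi\equiv 1$ on a half-line this would only yield $-\langle y,u_x\Phi\rangle\le\langle y,u\Phi\rangle\lesssim\sqrt{E(u)}\,\langle y,\Phi\rangle$, which is useless for monotonicity. The correct move (and what the paper does) is to integrate by parts using $y=u-u_{xx}$ to obtain
\[
-\int_{\R} u_x\, y\,\Phi\bigl(\cdot-z_{t_0}^{\mp R}\bigr)=\tfrac12\int_{\R}(u^2-u_x^2)\,\Phi'\bigl(\cdot-z_{t_0}^{\mp R}\bigr),
\]
which lives only on the compact support of $\Phi'$. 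It is exactly this compactly supported term that one absorbs with \eqref{po} and the constraint on $\gamma$, via the pointwise inequality $\dot z\,\Psi'-\tfrac{\gamma}{2}\Phi'\ge\tfrac{(1-\alpha)c_0}{4}\Psi'$ on $\R$; your later appeal to \eqref{po} is thus aimed at this term, not at the drift $\langle y,(u-\dot z)\Phi'\rangle$.

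Second, your concluding differential inequality has the wrong sign and the wrong error. One needs
\[
\frac{d}{dt}I^{+R}_{t_0}(t)\ \le\ -\,\frac{(1-\alpha)c_0}{8}\int_{\R}\bigl[\Psi'(u^2+u_x^2)+\gamma\Phi' y\bigr]
\;+\;C\,e^{-R/6}\,e^{-\frac{\beta c_0}{6}(t_0-t)},
\]
i.e.\ an upper bound with a time-integrable remainder (the factor $\beta c_0$, not $c_0$, comes from $\dot x-\dot z\ge\beta\dot x\ge\beta c_0$). Integrating from $t$ to $t_0$ then gives \eqref{mono} with $K_0\sim C/(\beta c_0)$. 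A pointwise lower bound $\frac{d}{dt}I\ge -K_0e^{-R/6}$ would give the opposite inequality and, besides, a bound growing linearly in $t_0-t$. The same correction applies to the $-R$ case for $t\ge t_0$.

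With these two adjustments, your outline coincides with the paper's proof: near/far splitting around the bump, exponential tail of $\Psi'$ via \eqref{psipsi} on $|x-x(t)|\le R_0$, smallness \eqref{loc} on the complement, and passage to the limit through \eqref{cont1}--\eqref{cont2} after controlling $|q_n(t,x_0)-q(t,x_0)|$ uniformly on compact time intervals.
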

\begin{proof}
The proof is the same as in \cite{L}. The only difference is that we replace $ \Psi $ by $ \Phi $ to test the density momentum $ y$.  

We first approximate $u_0$ by the sequence $ \{u_{0,n}\} \in Y\cap C^\infty(\R) $ as in \eqref{app}. Then, by Proposition \ref{prop1}, the emanating solutions $ u_n $ exist for all positive time  and satisfy \eqref{yL1}. In particular, for any $T_0>0 $ fixed, 
 $$
 \|u_{n,x} \|_{L^\infty(]0,T_0[\times\R)}\le \sup_{t\in ]0,T_0[} \|y_n(t)\|_{L^1(\R)} \le 2\exp(2\sqrt{E(u_0)}T_0) \|y_0\|_{\mathcal M} 
$$
and thus  for any $ \varepsilon>0 $ there exists $ \theta= \theta_{\varepsilon,T_0}>0 $ such that if $ \|u_n-u\|_{L^\infty(]0,T_0[\times\R)}<\theta_{\varepsilon,T_0} $ then 
\begin{equation}
\label{qqq}
|q_n(t,x_0)-q(t,x_0) | <\varepsilon \quad , \forall t\in 0,T_0] \; .
\end{equation}
 Now, for any fixed $ t_0, T>0 $, \eqref{cont1} ensures that there exists $n_0=n_0(t_0+T)\ge 0 $ such that for any $ n\ge n_0 $, 
$$
\|u_n-u\|_{L^\infty(]0,t_0+T[\times \R)} <  \max\Bigl(\frac{\alpha c_0}{2^6},  \theta_{\frac 1 2, t_0+T}\Bigr) \; .
$$
which together with  \eqref{loc} and \eqref{qqq}  force 
\begin{equation}\label{dif}
\sup_{t\in ]0,t_0+T[}  \|u_n\|_{L^\infty(|x-x(t)|>R_0)} <  \frac{(1-\alpha) c_0}{2^5} \end{equation}
and 
\begin{equation}\label{difq}
\ \sup_{t\in ]0,t_0+T[} |q_n(t,x_0-q(t,x_0)| \le \frac 1 2 \; .
\end{equation}
We first prove that    \eqref{mono}  holds on $ [0,t_0] $ with  $ u $ replaced by $ u_n $ for $ n\ge n_0 $. 
The following computations hold for $ u_n $ with $ n\ge n_0$ but , to simplify the notation, we drop the index $ n $. According to  \cite{L} we have
\begin{equation}
\frac{d}{dt}\int_{\R} (u^2+u_x^2) g =\int_{\R} u u_x^2 g'  +2 \int_{\R} u h g' \; , 
\label{go}
\end{equation}
where $ h:=(1-\partial_x^2)^{-1} (u^2+u_x^2/2) $ and  
 \begin{eqnarray}
 \frac{d}{dt}\int_{\R}  y g \, dx 
  &= &  \int_{\R} y u g' +\frac{1}{2} \int_{\R} (u^2-u_x^2) g'  \label{go2} \; .
 \end{eqnarray}
  Applying \eqref{go} with $ g(t,x)=\Psi(x - z^{R}_{t_0}(t)) $ and \eqref{go2} with  $ g(t,x)=\Phi(x - z^{R}_{t_0}(t)) $   we get
  \begin{eqnarray}
 \frac{d}{dt}I^{+R}_{t_0}(t) & = &-\dot{z}(t) \int_{\R}  \Bigl[ \Psi'(u^2+u_x^2) +\gamma \Phi' y \Bigr] +\frac{\gamma}{2} \int_{\R} (u^2-u_x^2) \Phi'\nonumber \\
 &  & + \int_{\R}  \Bigl[ \Psi'  u u_x^2 + \gamma  \Phi' y u \Bigr] +2 \int_{\R} u h \Psi' \nonumber \\
  &\le &   - \dot{z}(t) \int_{\R}  \Bigl[ \Psi'(u^2+u_x^2) +\gamma \Phi' y \Bigr] +\frac{\gamma}{2} \int_{\R} (u^2-u_x^2) \Phi'+J_1+J_2\, . \label{go3}
 \end{eqnarray}
  Now, in view of \eqref{po} and the conditions on $ \gamma$, we have 
 $$
 \dot{z}(t)\Psi' -\frac{\gamma}{2} \Phi' \ge \frac{(1-\alpha)}{4} c_0 \Psi' \quad\text{ on } \R
 $$
 that leads to 
 $$
 -\dot{z}(t) \int_{\R}  \Bigl[ \Psi'(u^2+u_x^2) +\gamma \Phi' y \Bigr]  +\frac{\gamma}{2} \int_{\R} (u^2-u_x^2) \Phi'\le -\frac{(1-\alpha) c_0}{4} \int_{\R} 
  \Bigl[ \Psi'(u^2+u_x^2) +\gamma \Phi' y \Bigr] 
 $$
 where we used that, according to \eqref{condz2} and \eqref{difq}, $ y \ge 0 $ on the support of $ \Phi' $.
  Finally the terms $J_1$ and $ J_2$ are treated as in \cite{EL3}. For instance, 
 to estimate $ J_{1} $ we divide $ \R $ into two regions relating to the size of $ |u| $ as follows
\begin{eqnarray}
J_{1}(t) &= & \int_{|x-x(t)|<R_0} \Bigl[ \Psi' u u_x^2+\gamma \Phi' y u \Bigr]
+ \int_{|x-x(t)|>R_0} \Bigl[ \Psi' u u_x^2+\gamma \Phi' y u \Bigr]\nonumber \\
 & = & J_{11}+J_{12}\quad . \label{J0}
\end{eqnarray}
Observe that \eqref{condz} ensures that $ \dot{x}(t)-\dot{z}(t)\ge \beta c_0 $ for all $t\in \R $  and thus, for $ |x-x(t)|<R_0 $,
 \begin{equation}\label{to1}
  x-z_{t_0}^{R}(t)=x-x(t)-R+(x(t)-z(t))-(x(t_0)-z(t_0))\le  R_0-R-\beta c_0 (t_0-t) 
  \end{equation}
  and thus the decay properties of $ \Psi' $ and the compact support of $ \Phi' $ ensure that 
\begin{eqnarray}
J_{11} (t) &\lesssim & \Bigl[\|u(t)\|_{L^\infty} (\|u_x(t)\|_{L^2}^2+c_0\|y(t)\|_{L^1})\Bigr]  e^{R_0/6}  e^{-R/ 6}
e^{-\frac{\beta}{6} c_0(t_0-t)} \nonumber \\
 & \lesssim  &  \| u_0\|_{H^1}(\|u_0\|_{H^1}^2+c_0\|y_0\|_{L^1}) e^{R_0/6} e^{-R/6}
e^{-\frac{\beta}{6} c_0(t_0-t)} \quad . \label{J11}
\end{eqnarray}
On the other hand,  \eqref{dif}  ensures that for all $ t\in [t_0-T,t_0] $ it holds
\begin{eqnarray}
J_{12} &\le & 4 \| u\|_{L^\infty(|x-x(t)|>R_0)} \int_{|x-x(t)|>R_0}  \Bigl[ \Psi'  u_x^2+\gamma \Phi' y  \Bigr]\nonumber \\
 & \le & \frac{ (1-\alpha) c_0 }{8}  \int_{|x-x(t)|>R_0} \Bigl[ \Psi'  u_x^2+\gamma \Phi' y  \Bigr]\quad .\label{J12}
\end{eqnarray}
Gathering \eqref{J0}, \eqref{J11}, \eqref{J12} and the estimates on $ J_2$  that we omit, since it is exactly the same terms as in \cite{EL3},  we conclude that there exists  $C >0$  only
depending on  $R_0 $ and $ E(u) $ 
  such
that for  $ R \ge R_0 $ and $ t\in [0,t_0] $ it holds 
\begin{equation}
\frac{d}{dt} I^{+R}_{t_0}(t) \le -\frac{ (1-\alpha)
c_0}{8}\int_{\R} \Bigl[ \Psi' (u^2+ u_x^2)+\gamma \Phi' y  \Bigr]+ C  e^{-R/6} e^{-\frac{\beta}{6}(t_0-t)} \; .
\label{nini}
\end{equation}
Integrating between $ t$ and $ t_0$  we obtain \eqref{mono} for  any $  t \in [ 0,t_0] $ and $ u $ replaced by $ u_n $ with $ n\ge n_0$. Note that the constant appearing in front of the exponential now also depends on $ \beta$. 
 The convergence results \eqref{cont1}-\eqref{cont2} then ensure that \eqref{mono} holds also for  $ u $ and   $t\in[ 0,t_0] $. Finally, \eqref{mono2} can be proven in exactly the same  way by noticing that  for  $|x-x(t)|<R_0 $ it holds 
  \begin{equation} \label{to2}
   x-z_{t_0}^{-R}(t)=x-x(t)+R+(x(t)-z(t))- (x(t_0)-z(t_0))\ge  -R_0+R+\beta c_0 (t-t_0) \; .
  \end{equation}
\end{proof}
\begin{remark}
It is worth noticing that the definitions of $ \Psi$, $\Phi $, \eqref{po}  and \eqref{defI} ensure that 
\begin{equation}
I^{x_0}_{t_0}(t) \ge \frac{1}{9\pi} \dist{u^2(t)+u_x^2(t)+y(t)}{ \Phi(\cdot-z^{x_0}_{t_0}(t)}, \forall t\in\R \, .
\end{equation}
\end{remark}
\section{Properties of the asymptotic object}
The aim of this section is to prove that for $u _0 \in Y $ satisfying Hypothesis \ref{hyp} and $H^1$-close enough to a peakon, one can extract from the orbit of $u_0 $ an asymptotic object that has a non negative density momentum and gives rise to a $ Y$-almost localized solution.

Let $ u_0 \in Y $ satisfying Hypothesis \ref{hyp}, such that 
\begin{equation}\label{stab}
 \| u_0-c\varphi \|_{H^1} < \Bigl(\frac{\varepsilon^2}{3c^2}\Bigr)^4 \; , \quad 0<\varepsilon<c,
 \end{equation}
 then, according to \cite{CS1} and \cite{E}, 
\begin{equation}\label{stabo}
 \sup_{t\in\R_+} \|u(t)-c\varphi(\cdot-\xi(t))\|_{H^1} <\frac{\varepsilon^2}{c}\; ,
  \end{equation}
 where $ u \in C(\R_+;H^1) $ is the solution emanating from $ u_0$ and  $ \xi(t)\in\R $ is any point where the function $ u(t,\cdot) $ attains its maximum. According to \cite{L}, by the implicit function theorem, we have the following lemma.
  \begin{lemma}\label{modulation}
 There exists $0< \varepsilon_0<1$,  $\kappa_0>0$, $n_0\in \N $  and $ K>1 $ such that if a solution $ u \in C(\R;Y) $ to 
 \eqref{CH} satisfies 
 \begin{equation}\label{gff}
\sup_{t\in\R_+}  \|u(t)-c \varphi(\cdot-z(t)) \|_{H^1} < c \varepsilon_0 \; ,
\end{equation}
for some function $ z \; :\; \R_+\to \R $, then there exists a unique function $ x \; : \R_+\to \R $ such that 
\begin{equation}
\sup_{t\in\R_+} |x(t)-z(t)| < \kappa_0 \;  \label{distxz}
\end{equation}
 and 
\begin{equation}
\int_{\R} u(t) (\rho_{n_0}\ast\varphi')(\cdot-x(t))=0, \quad \forall t\in\R_+ \; , \label{ort}
\end{equation}
where $ \{\rho_n\} $ is defined in \eqref{rho} and where 
$ n_0 $ satisfies : 
\begin{equation}\label{unic}
\forall y\in [-1/2,1/2], \quad \int_{\R} \varphi (\cdot-y)  (\rho_{n_0}\ast\varphi')=0 \Leftrightarrow y=0 \; .
\end{equation}
Moreover, $ x(\cdot)\in C^1(\R) $  with 
\begin{equation}\label{estc}
\sup_{t\in\R_+} |\dot{x}(t)- c| \le \frac{c}{8}
\end{equation}
and if 
  \begin{equation}\label{gf}
\sup_{t\in\R_+}   \|u(t)-c\varphi(\cdot-z(t)) \|_{H^1} <\frac{\varepsilon^2}{c}=c \Bigl(\frac{\varepsilon}{c} \Bigr)^2
\end{equation}
 for $ 0<\varepsilon <c \varepsilon_0 $ then 
\begin{equation}\label{fg}
\sup_{t\in\R_+} \|u(t)-c\varphi(\cdot-x(t))\|_{H^1} \le K \varepsilon \; .
\end{equation}
 \end{lemma}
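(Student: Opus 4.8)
The plan is to set up the implicit function theorem around the peakon, exactly in the spirit of Martel–Merle type modulation arguments, but with the mollified derivative $\rho_{n_0}\ast\varphi'$ as the orthogonality direction (this is forced because $\varphi'\notin H^1$, so one cannot pair $u(t)$ with $\varphi'$ directly in $H^1$; smoothing it makes the pairing a bona fide continuous functional on $H^1$). First I would fix $n_0$ so that \eqref{unic} holds: the function $y\mapsto \int_\R \varphi(\cdot-y)(\rho_{n_0}\ast\varphi')$ is smooth, vanishes at $y=0$ by oddness considerations, and has nonzero derivative at $y=0$ equal to $-\int_\R \varphi'(\cdot)(\rho_{n_0}\ast\varphi')\neq 0$ for $n_0$ large (it converges to $\|\varphi'\|_{L^2}^2$-type quantity, well, to $-\int (\varphi')^2$ in a regularized sense, which is nonzero); hence on a small symmetric interval it has the unique zero $y=0$, and one enlarges $n_0$ if necessary so the interval contains $[-1/2,1/2]$.

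Next I would define $\mathcal Y(u,x):=\int_\R u\,(\rho_{n_0}\ast\varphi')(\cdot-x)$ for $u$ in a small $H^1$-ball around $c\varphi$ and $x$ near $0$. One computes $\partial_x\mathcal Y(c\varphi,0)=-c\int_\R \varphi\,(\rho_{n_0}\ast\varphi'')(\cdot) = c\int_\R \varphi'(\rho_{n_0}\ast\varphi')\neq 0$ for $n_0$ large. The implicit function theorem then yields, for $u$ with $\|u-c\varphi\|_{H^1}$ small, a unique $x=x(u)$ near $0$ with $\mathcal Y(u,x(u))=0$, depending smoothly (in particular Lipschitz-continuously) on $u$; applying this with $u=u(t)$ translated by $z(t)$, i.e. working with $\tilde u(t)=u(t,\cdot+z(t))$, produces $x(t)-z(t)$ with the bound \eqref{distxz} for a suitable $\kappa_0$, once $\varepsilon_0$ is chosen small enough that \eqref{gff} keeps $\tilde u(t)$ in the IFT ball uniformly in $t$. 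The regularity $x(\cdot)\in C^1$ comes from $t\mapsto u(t)\in C(\R_+;H^1)$ together with $t\mapsto u(t)\in C^1(\R_+;L^2)$ (Proposition 2.1) and the smooth dependence of the implicit solution; differentiating the identity \eqref{ort} in $t$ and solving for $\dot x(t)$ gives an explicit formula whose denominator is $\int_\R u(t)(\rho_{n_0}\ast\varphi'')(\cdot-x(t))$, close to $-c\int(\varphi')^2$-type constant, and whose numerator is controlled using the equation \eqref{CH} and the $H^1$-closeness, yielding \eqref{estc} after shrinking $\varepsilon_0$.

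For the last assertion \eqref{fg}: if $\|u(t)-c\varphi(\cdot-z(t))\|_{H^1}<\varepsilon^2/c$ then by the triangle inequality and \eqref{distxz} (which gives $\|c\varphi(\cdot-z(t))-c\varphi(\cdot-x(t))\|_{H^1}\lesssim c|x(t)-z(t)|$), I would argue that $|x(t)-z(t)|$ is itself controlled by the defect: from $\mathcal Y(u(t),x(t))=0$ and the near-linear behavior of $\mathcal Y$ in $x$ near the peakon one extracts $|x(t)-z(t)|\lesssim \varepsilon^2/c^2$, so that $\|u(t)-c\varphi(\cdot-x(t))\|_{H^1}\lesssim \varepsilon^2/c\le K\varepsilon$ for $\varepsilon<c\varepsilon_0$ and $K$ large enough (absorbing the universal constants). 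The main obstacle, and the only genuinely delicate point, is getting the \emph{uniform in $t\in\R_+$} version with all constants independent of $t$: this requires that the smallness hypotheses \eqref{gff}/\eqref{gf} be preserved for all times (which is exactly what the orbital stability \eqref{stabo} provides, with $\varepsilon_0$ chosen below the orbital-stability threshold), and that the IFT be applied once on a fixed ball rather than time-by-time, so that $\kappa_0$, $K$, and the lower bound on $|\partial_x\mathcal Y|$ are uniform. Everything else is a routine, if somewhat lengthy, set of estimates on convolutions of $\varphi$ with $\rho_{n_0}$.
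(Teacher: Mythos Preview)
Your approach via the implicit function theorem is exactly what the paper invokes; the paper itself gives no proof here but simply cites \cite{L}, stating that the lemma follows ``by the implicit function theorem''. Your outline of the IFT setup, the choice of $n_0$, and the derivation of \eqref{estc} by differentiating \eqref{ort} all match that argument.

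There is, however, one slip in your last paragraph. The estimate $\|c\varphi(\cdot-z(t))-c\varphi(\cdot-x(t))\|_{H^1}\lesssim c\,|x(t)-z(t)|$ is false: since $\varphi'$ has a jump at the origin, one has $\|\varphi(\cdot-a)-\varphi\|_{H^1}\sim \sqrt{|a|}$ for small $|a|$, not $|a|$. This square-root loss is precisely the reason the hypothesis \eqref{gf} is stated with $\varepsilon^2/c$ while the conclusion \eqref{fg} is only linear in $\varepsilon$. From your (correct) bound $|x(t)-z(t)|\lesssim \varepsilon^2/c^2$ you therefore obtain $\|c\varphi(\cdot-z(t))-c\varphi(\cdot-x(t))\|_{H^1}\lesssim c\sqrt{\varepsilon^2/c^2}=\varepsilon$, and the triangle inequality then gives $\|u(t)-c\varphi(\cdot-x(t))\|_{H^1}\lesssim \varepsilon^2/c+\varepsilon\le K\varepsilon$. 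So \eqref{fg} still holds, but the intermediate bound $\lesssim \varepsilon^2/c$ you claim is too optimistic and should be replaced by this square-root computation.
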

 At this stage, we fix $ 0<\theta<c $ and we take 
 \begin{equation}\label{defep}
 \varepsilon= \frac{1}{2^4 K}\min \Bigl(\frac{\theta}{2^5}, c \, \varepsilon_0\Bigr)
 \end{equation}
 For $ u_0\in Y $  satisfying Hypothesis  \ref{hyp} and  \eqref{stab} with this $ \varepsilon$, \eqref{stabo}  ensures that \eqref{gff} and  thus \eqref{estc} hold. Moreover, \eqref{fg} is satisfies with 
 $
 K\varepsilon \le \min \Bigl(\frac{\theta}{2^9},\frac{c \varepsilon_0}{2^4}\Bigr) \; 
 $
 so that 
  \begin{equation}\label{Kep}
 \sup_{t\in\R_+} \|u(t)-c\varphi(\cdot-x(t))\|_{H^1} \le \frac{c \varepsilon_0}{2^4}\le \frac{c}{2^4}\; .
 \end{equation}
  It follows that  
  \begin{equation} \label{cc}
  \dot{x}(t) \ge \frac{7}{8} c \;, \quad \forall t\ge 0 .
  \end{equation} 
  and that $u $ satisfies the hypotheses of Lemma \ref{almostdecay} for any $ 0<\alpha<1$ such that 
  \begin{equation}\label{okok}
  (1-\alpha)\ge \frac{\theta}{4c} 
  \end{equation}
  and any  $ 0\le \gamma\le (1-\alpha) c $. In particular, $ u$ satisfies the hypotheses of Lemma \ref{almostdecay} for 
   $ \alpha=1/3$. Note that the hypothesis \eqref{difini} with 
 $$
 \eta_0=\frac{1}{K^8}\min\Bigl( \frac{1}{ 2^{10}},  \frac{\varepsilon_0}{2^4} \Bigr)^8
 $$
 implies that \eqref{stab} holds with $ \varepsilon $ given by \eqref{defep}.
\noindent
\begin{proposition}\label{propasym}
Let $ u_0 \in Y$ satisfying  Hypotheses \ref{hyp} and \eqref{stab} with $\varepsilon$ defined as in \eqref{defep}  and let $u \in C(\R;H^1(\R)) $ the emanating solution of \eqref{CH}. For any sequence $ t_n\nearrow +\infty $ there exists a subsequence $ \{t_{n_k}\}\subset \{t_n\} $ and $ \tilde{u}_0\in Y_+ $ such that 
\begin{equation}\label{pp2}
 u(t_{n_k},\cdot+x(t_{n_k})) \tendsto{n_k\to +\infty} \tilde{u}_0 \mbox { in } H^1_{loc}(\R) 
 \end{equation}
 where $ x(\cdot) $ is a $ C^1$-function satisfying \eqref{ort}, \eqref{estc} and  \eqref{fg}. 
Moreover, the solution of \eqref{CH} emanating from $ \tilde{u}_0 $ is $Y $-almost localized.
\end{proposition}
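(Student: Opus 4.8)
The plan is to extract a weak limit from the sequence $u(t_n,\cdot+x(t_n))$ along the lines of the classical Martel–Merle construction, and then to upgrade the weak convergence to strong $H^1_{\mathrm{loc}}$ convergence, to verify that the limit has nonnegative momentum, and finally to prove $Y$-almost localization of the emanating solution. The main obstacle, and the novelty of the paper, will be the last two steps, since unlike the case treated in \cite{L} we have no uniform-in-time bound on $\|y(t)\|_{\mathcal M}$, only the exponential bound \eqref{yL1}; controlling the positive part $y^+(t)$ uniformly will require a case analysis on the behaviour of the separating curve $q(t,x_0)$ relative to the bump $x(t)$.

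First I would set up the compactness. By \eqref{Kep} the translated solutions $u(t_n,\cdot+x(t_n))$ are bounded in $H^1(\R)$, and since $u_0$ satisfies Hypothesis \ref{hyp} and the negative part of $y(t)$ is supported in $]-\infty,q(t,x_0)]$, one can localize attention to a neighbourhood of the bump where the momentum density is automatically nonnegative when $q(t,x_0)$ is far to the left of $x(t)$. In the first case ($q(t,x_0)\le x(t)-R_t$ with $R_t\to\infty$), the momentum density near the bump is nonnegative and the argument of \cite{L} applies essentially verbatim: the almost-monotonicity Lemma \ref{almostdecay}, applied along a curve $z(\cdot)$ travelling to the right of $q(\cdot,x_0)$ but well to the left of the bump, yields an almost-decay of $E(\cdot)+M(\cdot)$ to the right of that curve, which gives a uniform-in-$n$ bound on the total variation of $y(t_n)$ near $x(t_n)$; Helly's selection theorem then produces, up to subsequence, a limit measure $\tilde y_0\ge 0$ and hence $\tilde u_0\in Y_+$, with $u(t_{n_k},\cdot+x(t_{n_k}))\rightharpoonup\tilde u_0$ weakly in $H^1$ and tightly at the level of measures on compact sets.

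The second case — $q(t,x_0)$ stays within a bounded distance of $x(t)$ for all large $t$ — is the hard part. Here I would show, using \eqref{go2} with a test function supported just to the left of $q(\cdot,x_0)$ together with the fact that $u_x\ge -|u|$ is bounded below on the bump region (from \eqref{Kep} and the explicit peakon profile, $-u_x$ is large and positive just left of the crest), that $\int y^-(t)$ decays exponentially; combined with the conservation of $M(u)=\langle y(t),1\rangle = \int y^+ - \int(-y^-)$ this forces $\int y^+(t)$ to be bounded uniformly in $t$, hence $\|y(t)\|_{\mathcal M}$ is uniformly bounded, and Helly applies as before. In both cases, once the weak $H^1$ limit $\tilde u_0$ is in hand, strong $H^1_{\mathrm{loc}}$ convergence \eqref{pp2} follows from the weak convergence of $u_x(t_{n_k},\cdot+x(t_{n_k}))$ together with the tight convergence of the momentum measures: writing $u^2+u_x^2$ as a combination of $u\cdot(u-u_{xx})$-type pairings and using $p\ast$ regularization, no mass of energy escapes on compact sets, so the $H^1$ norms converge locally.

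Finally, to see that the solution $\tilde u$ emanating from $\tilde u_0$ is $Y$-almost localized, I would invoke Proposition \ref{prop1}(3) to propagate the weak convergence $u(t_{n_k},\cdot+x(t_{n_k}))\rightharpoonup\tilde u_0$ to all times, obtaining $u(t_{n_k}+t,\cdot+x(t_{n_k}))\rightharpoonup \tilde u(t,\cdot+\tilde x(t))$ weakly in $H^1$ for every $t$, with $\tilde u-\tilde u_{xx}\in\mathcal M_+$. Then the almost-localization estimate \eqref{defloc} for $\tilde u$ is obtained by passing to the limit in the almost-monotonicity bounds \eqref{mono}–\eqref{mono2} for $u$: applying Lemma \ref{almostdecay} at two shifted positions $\pm R$ on either side of the bump and combining the two one-sided monotonicities shows that, uniformly in the base time, the energy and the momentum mass of $u(t_{n_k}+\cdot)$ outside a window of radius $R_\varepsilon$ around $x(\cdot)$ is at most $\varepsilon$ plus an error that vanishes as $n_k\to\infty$; the weak lower semicontinuity of these quantities transfers the bound to $\tilde u$, which is exactly \eqref{defloc}. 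The monotonicity of $\tilde x$ with $\dot{\tilde x}\ge c>0$ comes from \eqref{estc} together with the uniform convergence of $\dot x$ on compact time intervals implicit in the modulation setup.
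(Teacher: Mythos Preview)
Your overall two-case split on the position of $q(t,x_0)$ relative to $x(t)$ matches the paper, but both hard steps contain genuine gaps.

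\textbf{Case 2 mechanism.} You propose to use \eqref{go2} with a test function and the heuristic that ``$-u_x$ is large and positive just left of the crest''. The sign is backwards: on $]-\infty,x_0(t)]$ one has $u_x\ge u$ by \eqref{dodo}, so it is $u_x$ (not $-u_x$) that is bounded below by $c/4$ on $[x_0(t)-\ln 2,\,x_0(t)]$. More importantly, the paper does \emph{not} prove that the full mass $\int y^-(t)$ decays exponentially, and no such global statement is available. The engine is Lagrangian, not Eulerian: the flow identity $y(0,x)=y(t,q(t,x))\,q_x(t,x)^2$ together with $u_x\ge c/4$ on that strip forces $q_x\gtrsim e^{ct/4}$ there, hence $\bigl|\int_{x_0(t)-\ln 2}^{x_0(t)} y(t)\bigr|\lesssim e^{-ct/4}\|y_0\|_{\mathcal M}$. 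This local decay is then fed into a cut-off computation (with $\Phi(\cdot-x_0(t))$) to bound $\int y^+$ uniformly via conservation of $M$; a second flow argument (starting the characteristics at time $t/2$) extends the decay of $y^-$ to the \emph{growing} interval $[x_0(t)-\tfrac{c}{96}t,\,x_0(t)]$. This last step is what forces $\tilde y_0\ge 0$, and it is absent from your outline.

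\textbf{$Y$-almost localization.} Merely ``passing to the limit in \eqref{mono}--\eqref{mono2} and using weak lower semicontinuity'' does not close. The right-tail functional $J^R_{\gamma,r}$ is almost decreasing in time, so its smallness for $\tilde u$ does follow by the limit you describe; but the left-tail functional $J^R_{\gamma,l}$ is almost \emph{increasing}, and lower semicontinuity goes the wrong way there. The paper argues by contradiction: if $G^{R_0}_{o,\gamma}(\tilde u(t_{R_0},\cdot+\tilde x(t_{R_0})))\ge\varepsilon_0$ for some $R_0$ large, conservation of $G=G^{R_0}_{i,\gamma}+G^{R_0}_{o,\gamma}$ forces the inner part $G^{R_0}_{i,\gamma}$ to drop by a fixed amount between times $0$ and $t_{R_0}$; transferring this back to $u$ along the subsequence $\{t_{n_k}\}$ (the inner weights $\Psi(\cdot+R)-\Psi(\cdot-R)$ and $\Phi(\cdot+R)-\Phi(\cdot-R)$ lie in $C_0(\R)$, so strong $H^1_{\mathrm{loc}}$ and weak-$\ast$ convergence suffice) and then \emph{iterating} the almost-monotonicity of $J^{R_0}_{\gamma,l}$ along a sparse subsequence produces arithmetic growth of $J^{R_0}_{\gamma,l}(u(t_{n_k'},\cdot+x(t_{n_k'})))$, contradicting the conservation of $G$. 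This iteration is the missing idea in your last paragraph.
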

\begin{proof}
In the sequel we set $ x_0(t)=q(t,x_0) $, $ t\ge 0 $, so that 
\begin{equation}\label{yyyy}
\supp y^{-}(t)\subset ]-\infty,x_0(t)] \quad \text{ and } \quad \supp y^+(t) \subset [x_0(t),+\infty[ , \quad \forall t\ge 0 \; .
\end{equation}
We separate two possible behaviors of $ x_0(\cdot)  $ which has to be treated in  different ways. 
In the first case we can prove that $ x(t)-x_0(t) \to +\infty $ so that for $ t >0 $ large enough we are very close to the case of a non negative density momentum.
 On the other hand, in the second case  $ x(t)-x_0(t) $ is uniformly bounded and we have to use other arguments. In this case we will first prove that the total variation of $ y $ stays bounded for all positive times which enables us to pass to the limit. The non negativity of the limit follows from a decay estimate on the total variation of $ y(t) $ on a growing interval at the left of $ x_0(t)$.\vspace*{4mm} \\
 \noindent
 {\bf Case 1.}  There exists $ t_*\ge 0 $ such that 
\begin{equation}\label{case1}
x_0(t_*) < x(t_*)-\ln (3/2) \; .
\end{equation}
In this case we notice that in view of \eqref{Kep} and \eqref{sobo}, for any $ t\ge 0 $, 
\begin{equation}\label{oo} 
u(t,x) <c \varphi(x-x(t))+ \frac{c}{16} <\frac{2c}{3}+ \frac{c}{16}\le \frac{3c}{4} \quad \text{ for } x\le x(t)-\ln (3/2)
\end{equation}
Therefore, by a continuity argument,  \eqref{defq}, \eqref{cc}, \eqref{oo} and \eqref{case1}  lead to 
\begin{equation}\label{difx0}
x(t)-x_0(t) \ge \frac{c}{8}(t-t_*) +x(t_*)-x_0(t_*)> \frac{c}{8}(t-t_*) +\ln 2  , \quad \forall t \ge t_* \; .
\end{equation}
Applying the almost monotonicity result \eqref{mono2} for $ t\ge t_* $ with $z(t)=x(t)-x_0(t) $  and $ R=x(t_*)-x_0(t_*)>0$ we deduce that there exists $ A_0=A_0(E(u_0), \|y(t_*)\|_{\mathcal M})>0 $ such that 
\begin{equation} \label{A0}
 \Bigl\langle  y(\cdot+x(t), \Phi(\cdot +\frac{c}{8}(t-t_*))\Bigr\rangle   \le A_0 , \quad \forall t\ge t_* \; .
 \end{equation}
 We set 
 $$
  \breve{u}(t,x)=u(t,x) \Phi\Bigl(\cdot-x(t)+\frac{c}{8}(t-t_*)\Bigr) \; ,
  $$
  where $ \Phi $ is defined in \eqref{defPhi}.
  According to the conservation of $ E(\cdot)$, \eqref{difx0} and \eqref{A0}, $ \breve{u} $ is uniformly bounded in $ Y $ for positive time. Therefore for any sequence $ t_n\nearrow +\infty $, there exists $ \tilde{u}_0 \in Y $ and a subsequence of $ \{t_{n_k}\} $  (that we still denote by $t_{n_k} $ to simplify the notation) such that 
\begin{eqnarray*}
\breve{u}(t_{n_k},\cdot +x(t_{n_k})) & \rightharpoonup & \tilde{u}_0 \mbox{ in } H^1(\R)  \\  
\breve{u}(t_{n_k},\cdot +x(t_{n_k})) & \to & \tilde{u}_0 \mbox{ in } H^1_{loc}(\R)  \\  
(\breve{u}-\breve{u}_{xx})(t_{n_k},\cdot +x(t_{n_k})) & \rightharpoonup\hspace{-1mm} \ast & \tilde{y}_0= \tilde{u}_0 - \tilde{u}_{0,xx}  \mbox{ in } {\mathcal M}(\R) \; .\nonumber
\end{eqnarray*}
Since  for $ t\ge t_* $, according to \eqref{difx0} and the support property of $ \Phi $ and $ \Phi'$, $ (\breve{u}-\breve{u}_{xx})(t,\cdot +x(t)) =y(t,\cdot)$ on   $ ]-\frac{c}{8}(t-t_*)+2,+\infty[ $ and thus is a non negative measure on $ ]-\frac{c}{8}(t-t_*)+2,+\infty[ $,  we infer that $ \tilde{u}_0\in Y_+ $. Moreover, the support properties of $ \Phi $ and $\Phi' $ imply that 
\begin{equation}
u(t_{n_k},\cdot +x(t_{n_k}))  \to  \tilde{u}_0 \mbox{ in } H^1_{loc}(\R)  \label{vc} 
\end{equation}
and, for all $ A>0 $ and $ \phi\in C_0(\R) $ with $ \supp \phi \subset [-A,+\infty[ $, 
\begin{equation}\label{vcc}
 \Bigl\langle y(t_{n_k},\cdot +x(t_{n_k})), \phi \Bigr\rangle \to   \Bigr\langle\tilde{y}_0, \phi\Bigr\rangle \; .
 \end{equation}
  Since, by  \eqref{estc}, $ \{x(t_n+\cdot)-x(t_n)\} $ is uniformly equi-continuous, Arzela-Ascoli theorem ensures that there exists a subsequence
   $ \{t_{n_k}\} \subset \{t_n\} $ and $ \tilde{x}\in C(\R) $ such that for all $ T>0 $,
   \begin{equation}\label{cvx}
   x(t_{n_k}+\cdot)-x(t_{n_k}) \tendsto{t\to+\infty} \tilde{x} \mbox{ in } C([0,T]) \; .
   \end{equation}
  Therefore, 
 on account of  \eqref{vc}, \eqref{cvx} and part {\bf 3.} of Proposition \ref{prop1} for any $ t\in \R $,
\begin{eqnarray}
u(t_{n_k}+t,\cdot +x(t_{n_k}+t)) & \to & \tilde{u}(t,\cdot+\tilde{x}(t)) \mbox{ in } H^1_{loc}(\R)   \label{strongcv}
\end{eqnarray}
where $ \tilde{u} \in C(\R_+;H^1(\R)) $ is the solution of \eqref{CH} emanating from $ \tilde{u}_0\in Y_+ $. 
Moreover, for all $ A>0 $ and  $ \phi \in C_0(\R) $ with $ \supp \phi \subset [-A,+\infty[ $,  it holds 
\begin{equation}
\dist{y(t_{n_k}+t,\cdot +x(t_{n_k}+t))}{\phi} \rightarrow  \dist{\tilde{y}(t,\cdot+\tilde{x}(t))}{\phi}  \label{weakcvy}\; ,
\end{equation}
where $ \tilde{y}=\tilde{u}-\tilde{u}_{xx} $. Indeed, on one hand, it follows from  part {\bf 3.} of Proposition \ref{prop1}  that 
$$
\dist{y(t_{n_k}+t,\cdot +x(t_{n_k})+\tilde{x}(t))}{\phi} \rightarrow  \dist{\tilde{y}(t,\cdot+\tilde{x}(t))}{\phi} 
$$
and on the other hand, the uniform continuity of $ \phi $ together with \eqref{cvx} ensure that 
\begin{align*}
&\dist{y(t_{n_k}+t,\cdot +x(t_{n_k})+\tilde{x}(t))-y(t_{n_k}+t,\cdot +x(t_{n_k}+t))}{ \phi}  \\
&= \dist{y(t_{n_k}+t)}{\phi(\cdot-x(t_{n_k})-\tilde{x}(t))-\phi(\cdot-x(t_{n_k}+t))}\to 0 
\end{align*}
In view of \eqref{strongcv}  we infer that $ (\tilde{u}, \tilde{x}(\cdot)) $  satisfies   \eqref{ort} and \eqref{fg} 
with the same $ \varepsilon $ than $ (u,x(\cdot))$.   Therefore,  \eqref{defep} 
 forces    $ (\tilde{u}, \tilde{x}(\cdot)) $ to satisfy \eqref{gff} and the  uniqueness result in Lemma \ref{modulation} ensures that $ \tilde{x}(\cdot) $ is a $ C^1$-function  and  satisfies \eqref{estc}.

Let us now prove that $ \tilde{u} $ is $Y$-almost localized. In the sequel, for $ u\in Y $ and $ \gamma\ge 0 $, we defined the quantity $ G(u) $ by 
  $$
   G_{\gamma}(u)=E(u)+\frac{c}{4} \dist{u-u_{xx}}{1} \; .
   $$
   We  will also 
make use of the following functionals that measure the quantity $ G(u)$ at the right and at the left of $ u$.
For $ 0\le \gamma\le c 2^{-4} $, $ v\in Y$ and $ R>0 $ we set 
 \begin{equation}\label{defJr}
 J_{\gamma,r}^{R}(v)=\dist{v^2+v_x^2}{\Psi(\cdot -R)} +\gamma \dist{v-v_{xx}}{\Phi(\cdot-R)}
 \end{equation}
 and
 \begin{equation}\label{defJl}
 J_{\gamma, l}^R(v)=\dist{v^2+v_x^2}{(1-\Psi(\cdot+R))} +\gamma  \dist{v-v_{xx}}{1-\Phi(\cdot+R)}\; .
 \end{equation}
 We  separate  $ G(v) $ into  two parts : 
\begin{align*}
G_{o,\gamma}^{R}(v) &=\dist{v^2+v_x^2}{1-\Psi(\cdot+R)+\Psi(\cdot-R)} +\gamma \dist{v-v_{xx})}{1-\Phi(\cdot+R)+\Phi(\cdot-R)}\\
 & =J_{\gamma,r}^R(v)+J_{\gamma,l}^R(v) \; ,
\end{align*}
which  almost ``localizes''  outside  the ball of radius $ R $ and 
\begin{align*}
G_{i,\gamma}^{R}(v) & =\dist{v^2+v_x^2}{\Psi(\cdot+R)-\Psi(\cdot-R)}+\gamma \dist{v-v_{xx}}{\Phi(\cdot+R)-\Phi(\cdot-R)} \\
 & =G(v)-G_o^R(v) \; ,
 \end{align*}
%:
which almost ``localizes'' inside this ball. It is however worth mentioning that, in view of \eqref{defPhi},
 the function $\Phi(\cdot+R)-\Phi(\cdot-R) $ is  indeed supported in $ [-R,R+1]$. 
We first notice that the almost monotonicity result \eqref{mono} ensures that for any $ \varepsilon>0 $ there exists $ R_\varepsilon'>0 $ such that 
 \begin{equation}\label{decR}
 J_{\gamma,r}^{R_\varepsilon'}(\tilde{u}(t, \cdot +\tilde{x}(t))) < \varepsilon  , \quad \forall t\ge 0 \, .
 \end{equation}
 Indeed, let $ t_0>0 $ be fixed.  Fixing $\alpha=\beta=1/4 $ and taking $ z(\cdot)=(1-\alpha) x(\cdot)$, $z(\cdot) $ clearly satisfies \eqref{condz}
  and \eqref{fg}-\eqref{difx0} ensure that it also satisfies \eqref{zo1} for $ R\ge 2$. Moreover, we have  $ J_{\gamma,r}^{R}(u(t_0, \cdot +x(t_0))=I^{+R}_{t_0}(t_0) $
  where $I^{+R}_{t_0} $ is defined in \eqref{defI}.  Since obviously, 
 $$
 J_{\gamma,r}^R \Bigl(u(t,\cdot+x(t))\Bigr)\ge I^{+R}_{t_0}(t)\; , \quad \forall 0\le t\le t_0,
 $$
 we deduce from \eqref{mono} that 
 \begin{equation}\label{monoJr}
 J_{\gamma,r}^R \Bigl(u(t_0,\cdot+x(t_0))\Bigr)\le J_{\gamma,r}^R\Bigl(u(t,\cdot+x(t))\Bigr)+K_0 e^{-R/6} \;  , \quad \forall 0\le t\le t_0,
 \end{equation}
 where $ K_0 $ is the constant appearing in \eqref{mono}.  Therefore, taking $ R_\varepsilon' \ge 2 $ such that $ K_0 e^{-R_\varepsilon/6} <\varepsilon/2 $ and 
 $ J_r^{R_\varepsilon'}(u(0, \cdot+x(0)) <\varepsilon/2 $ we get that for all $ t \ge 0 $
 \begin{equation}\label{dede}
  J_{\gamma,r}^{R_\varepsilon'} \Bigl(u(t,\cdot+x(t))\Bigr)\le   J_{\gamma,r}^{R_\varepsilon'} \Bigl(u(0,\cdot+x(0))\Bigr) +\varepsilon/2 \le \varepsilon 
 \end{equation}
 Passing to the weak limit, this leads to \eqref{decR}.  Moreover, \eqref{difx0} ensures that for any $ R >0 $ there exists $ t(R)>0 $ such that 
 $$ x_0(t) \le  x(t_R)-R+\frac{9}{10}(x(t)-x(t_R) , \quad \forall t\ge t(R)\; .
 $$
 Therefore, the hypotheses \eqref{condz} and \eqref{zo2} of Lemma \ref{almostdecay} are fulfilled for $z(t)=x(t_R)-R+\frac{9}{10}(x(t)-x(t_R))$ and $ t\ge t_R $ and, proceeding as above, \eqref{mono2} leads to 
  \begin{equation}\label{monoJl}
 J_{\gamma,l}^R \Bigl(u(t,\cdot+x(t))\Bigr)\ge J_{\gamma,l}^R\Bigl(u(t(R),\cdot+x(t(R)))\Bigr)-K_0 e^{-R/6} \;  , \quad \forall  t\ge t(R)
 \end{equation}
 
Now we notice that to prove $ Y$-almost localization of $ \tilde{u} $, it suffices to prove that  for all $\varepsilon>0 $, there exists $ R_\varepsilon>0 $ such that
\begin{equation}\label{po2}
G_{o,\gamma}^{R_\varepsilon}\Bigl(\tilde{u}(t,\cdot+\tilde{x}(t))\Bigr)<\varepsilon \; , \quad \forall t\in\R \, .
\end{equation}
 \noindent
 Indeed if \eqref{po2} is true for some $ (\varepsilon, R_\varepsilon) $ then $ (\tilde{u},\tilde{x}) $ satisfies \eqref{defloc} with $(\varepsilon/2, 2 R_\varepsilon) $.
 As indicated above, we prove \eqref{po2} by contradiction. Assuming that \eqref{po2} is not true,  there exists $ \varepsilon_0>0 $ such that for any $ R>0 $ there exists $ t_R\in \R $ satisfying 
  \begin{equation}
  G_{o,\gamma}^{R}\Bigl( \tilde{u}(t_R, \cdot+\tilde{x}(t_R))\Bigr)\ge \varepsilon_0
  \end{equation}
  Let $ R_0>R_{\frac{\varepsilon}{10}}' $ such that 
   \begin{equation}
  G_{o,\gamma}^{R_0}\Bigl( \tilde{u}(0)\Bigr)\le \frac{\varepsilon_0}{10} \quad \text{ and } \quad K_0 e^{-R_0/6}<\frac{\varepsilon_0}{10} \; .
  \end{equation}
   The conservation of $G $ then forces
  $$
   G_{i,\gamma}^{R_0} (\tilde{u}(t_{R_0}, \cdot+\tilde{x}(t_{R_0}))\le G^{R_0}_{i,\gamma}(\tilde{u}(0))-\frac{9}{10} \varepsilon_0 \; .
  $$
  Recalling that $ \Psi(\cdot+R)-\Psi(\cdot-R) $ and $ \Phi(\cdot+R)-\Phi(\cdot-R) $ belong to $ C_0(\R) $,  the convergence results \eqref{strongcv}-\eqref{weakcvy} ensure that for $ k\ge k_0$ with $ k_0 $ large enough,
  $$
  G_{i,\gamma}^{R_0} (u(t_{n_k}+t_{R_0}), \cdot+x(t_{n_k}+t_{R_0}))\le G^{R_0}_{i,\gamma}(u(t_{n_k}, \cdot+x(t_{n_k})))-\frac{4}{5} \varepsilon_0 \; .
  $$
  We first assume that $ t_{R_0}>0 $. By \eqref{decR} and the conservation of $G $ this ensures that 
  \begin{equation}\label{bb}
  J_{\gamma,l}^{R_0}(u(t_{n_k}+t_{R_0}), \cdot +x(t_{n_k}+t_{R_0}) \ge  J_{\gamma,l}^{R_0}(u(t_{n_k}, \cdot +x(t_{n_k})) +\frac{7}{10} \varepsilon_0 \; .
  \end{equation}
  Now we take a subsequence $\{t_{n_k'}\}$ of $ \{t_{n_k}\} $ such that $ t_{n_0'} \ge t(R_0)$, $ t_{n_{k+1}'}-t_{n_k'} \ge t_{R_0}$ and $ n_k'\ge n_{k_0} $. From \eqref{bb} and again \eqref{monoJl},  we get that for any $ k\ge 0 $, 
  $$
  J_{\gamma,l}^{R_0}(u(t_{n_k'}, \cdot +x(t_{n_k'}) )\ge  J_{\gamma,l}^{R_0}(u(t_{n_0'}, \cdot +x(t_{n_0'})) +\frac{3}{5} k\,  \varepsilon_0 \tendsto{k\to +\infty} +\infty \; 
  $$
  that contradicts the conservation of $ G $  in view of \eqref{dede} and   $ G^{R_0}_{i,\gamma}(u(t_{n_k'}, \cdot+x(t_{n_k'})))\to G^{R_0}_{i,\gamma}(\tilde{u}(0, \cdot+\tilde{x}_0))) $. 
  Finally, if $ t_{R_0} <0 $, then for $ k\ge k_0 $ such that  $ t_{n_k}>|t_{R_0}| $  we get in the same way 
  $$
   0\le J_{\gamma,r}^{R_0}\Bigl(u(t_{n_k}, \cdot +x(t_{n_k}) \Bigr)\le  J_{\gamma,r}^{R_0}\Bigl(u(t_{n_k}-|t_{R_0}|, \cdot +x(t_{n_k}-|t_{R_0}|))\Bigr) -\frac{7}{10} \varepsilon_0 \; .
   $$
  that  contradicts \eqref{decR} and $ R_0>R_{\frac{\varepsilon}{10}}' $ for $ k \ge k_0 $ large enough. This proves the $ Y$-almost localization of $ \tilde{u} $.

{\bf Case 2.} For all $ t\ge 0 $ it holds 
\begin{equation}\label{case2}
x_0(t)\ge x(t)-\ln (3/2) \; .
\end{equation}
Noticing that  \eqref{dodo} leads to $ u_x(t,\cdot)\ge u(,\cdot)$ on $ ]-\infty, x_0(t)[ $ and that \eqref{Kep}, \eqref{sobo} and \eqref{case2} ensure that 
 $  u(t,x_0(t)-\ln 2)\ge c \varphi(-\ln 3) -\frac{c}{16}\ge \frac{13c}{48}  $, we infer that 
\begin{equation}\label{cc22}
 u(t) \ge  \frac{13}{48} c \text{ on } [x_0(t)-\ln 2, x_0(t)] , \quad \forall t\ge 0 \; .
\end{equation}

We claim that
\begin{equation}
\|y(t)\|_{\mathcal M} \le 4 \Bigl(2+ \frac{9}{c}  \sqrt{E(u_0)}  \Bigr) \|y_0\|_{\mathcal M} , \quad \forall t\ge 0  \; . \label{claimy}
\end{equation}
and 
\begin{equation}
\|y^-(t)\|_{\mathcal M(]x(t)-\frac{c}{96}t,+\infty[)}\tendsto{t\to +\infty} 0  \; . \label{claimy2}
\end{equation}
To prove this claim we approximate $ u_0 $ by $ \{u_{0,n}\} $ as in \eqref{app} and work with the global solutions $ u_n $ emanating from $ u_{0,n} $ that satisfy Hypothesis \ref{hyp} with the same $ x_0 $. We define $ q_n $ as the flow associated with $ u_n $ and we set 
$$
 x_{0,n}(t)=q_n(t,x_0) 
 $$
 Let us recall that, in \cite{C}, it is shown that  for any  $ (t,x)\in \R_+\times \R $, 
  \begin{equation}\label{yy}
  y_n(0,x)=y_n(t,q_n(t,x)) q_{n,x}(t,x)^2 
  \end{equation}
  with
\begin{equation}\label{formula1}
q_{n,x}(t,x) =\exp\Bigl( \int_0^t u_x(s,q(s,x))\,ds\Bigr) \; .
\end{equation}
 Let $ t_0>0 $ be fixed. According to \eqref{cont1}  and \eqref{cc22} there exists $ n_0\ge 0 $ such that for all $ n\ge n_0 $, 
\begin{equation}\label{fc2}
 u_n(t) \ge \frac{c}{4}  \text{ on } [x_{0,n}(t)-\ln 2, x_{0,n}(t)] , \quad \forall t\in [0,t_0]
  \end{equation}
  We proceed in three steps.\\
  {\it Step 1.} 
In this step, we prove that   for all $t\in [0,t_0]$ and $n $ large enough, 
\begin{equation}
\Bigl|  \displaystyle \int_{x_{0,n}(t)-\ln 2}^{x_{0,n}(t)} y_n(t,s) \, ds  \Bigr| \le \exp(- \frac{c}{4} t ) \|  y_{0,n}\|_{L^1}  \; . \label{claimx0}
\end{equation}
Let $ t\in [0,t_0] $ and $ x\in [x_{0,n}(t)-\ln 2, x_{0,n}(t)]  $, then one has, $ \forall 0 \le \tau \le t $, 
$$
q_n(\tau,q_n^{-1}(t,x))\le q_n(\tau,q_n^{-1}(t,x_{0,n}(t))=x_{0,n}(\tau)
$$
where  for all $ t\ge 0 $, $ q_n^{-1}(t,\cdot) $ is the inverse mapping of $ q_n(t,\cdot) $.
Since, according to \eqref{dodo} and \eqref{fc2},  $u_{n,x}\ge u_n \ge \frac{c}{4} $ on $ [x_{0,n}(\tau)-\ln 2, x_{0,n}(\tau)]  $ it holds 
$$
u_n(\tau,x_{0,n}(\tau))\ge u_n\Bigl(\tau,q_n(\tau,q_n^{-1}(t,x))\Bigr) \ge 0 
$$
and thus 
$$
\frac{d}{d\tau} \Bigl( x_{0,n}(\tau)-q_n(\tau, q_n^{-1}(t,x)\Bigr) \ge 0 \quad \text{ on } [0,t]\; .
$$
Therefore, for all $ \tau\in [0,t] $ one has 
$$
q_n(\tau,q_n^{-1}(t,x))\in [x_{0,n}(\tau)-\ln 2, x_{0,n}(\tau)]
$$
which ensures that 
$$
u_{n,x}\Bigl(\tau,q_n(\tau,q_n^{-1}(t,x))\Bigr)\ge \frac{c}{4} 
$$
and \eqref{formula1} leads to 
$$
q_{n,x}(t,q_n^{-1}(t,x))=\exp\Bigl( \int_0^t u_{n,x}\Bigl(\tau,q_n(\tau,q_n^{-1}(t,x))\Bigr)\, d\tau\Bigr) \ge \exp( \frac{c}{4} t) \; .
$$
Therefore for all $ t\in [0,t_0] $, \eqref{yy} leads to
\begin{align*}
-\int_{x_{0,n}(t)-\ln 2}^{x_{0,n}(t)} y_n(t,x) \, dx &= -\displaystyle \int_{q_n^{-1}(t,x_{0,n}(t)-\ln 2)}^{x_{0}} y_n(t,q_n(t,\theta)) q_{n,x}(t,\theta)\, d\theta \\
 & \le -e^{-\frac{c}{4}t}\displaystyle  \int_{q_n^{-1}(t,x_{0}(t)-\ln 2)}^{x_{0}} 
y_n(t,q_n(t,\theta)) q_{n,x}^2(t,\theta)\, d\theta \\
& \le -e^{-\frac{c}{4}t}\displaystyle \int_{x_{0}-\ln 2}^{x_{0}}  y_n(0,\theta) \, d\theta
\end{align*}
which proves  \eqref{claimx0}.\\
{\it Step 2.} In this step we prove that for $ t\in[0,t_0] $ and $ n\ge 0  $ large enough,
\begin{equation}\label{step2}
 \|y_n(t) \|_{L^1} \le 2 \Bigl(2+ \frac{9}{c}  \sqrt{E(u_{0,n})}  \Bigr) \|y_{0,n}\|_{\mathcal M} \; .
 \end{equation}
Let $ \Phi\; : \R\to \R_+  $ defined by  $ \Phi\equiv 0 $ on $ ]-\infty,-\ln 2] $,  $ \Phi=1 $ on $ \R_+$ and $ 
\Phi(x)= (x+\ln 2)/\ln 2 $ for $x\in [-\ln 2,0] $.   Then 
\begin{eqnarray}
\frac{d}{dt} \int_{\R} y_n(t) \Phi(\cdot-x_{0,n}(t)) & =&  -\dot{x}_{0,n}(t) \int_{\R} y \Phi'(\cdot-x_{0,n}(t)) +  \int_{\R} u_n  y_n\Phi'(\cdot-x_{0,n}(t)) \nonumber \\
& & + \int_{\R} (u_n^2-u_{n,x}^2) \Phi'(\cdot-x_{0,n}(t)) \label{gv}
\end{eqnarray}
Since $ \supp \Phi'(\cdot-x_{0,n}(t)) \subset [x_{0,n}(t)-\ln 2, x_{0,n}(t)] $,  \eqref{dodo} ensures that the last term of the right-hand side of \eqref{gv} is non positive 
 and \eqref{claimx0} ensures that 
 \begin{eqnarray}
\Bigl|   \int_{\R} (u_n(t)-\dot{x}_{0,n}(t))  y_n(t) \Phi'(\cdot-x_{0,n}(t))\Bigr| &\le &  \sqrt{2 E(u_{0,n})} \,\Bigl|   \int_{\R}   y_n(t) \Phi'(\cdot-x_{0,n}(t))\Bigr|\nonumber\\
& \le & \frac{\sqrt{2}}{\ln 2} \sqrt{E(u_{0,n})} \|y_{0,n}\|_{L^1}  \exp(-\frac{c}{4}t) \; .
 \end{eqnarray}
 Therefore \eqref{gv} leads to 
 \begin{eqnarray}
 \int_{\R} y_n(t) \Phi(\cdot-x_{0,n}(t)) &\le & \int_{\R} y_{0,n} \Phi(\cdot-x_{0,n}) + \frac{8}{c}  \sqrt{E(u_{0,n})} \|y_{0,n}\|_{L^1}  \nonumber \\
 & \le & \Bigl(1+ \frac{4\sqrt{2}}{c\, \ln 2}  \sqrt{E(u_{0,n})}  \Bigr) \|y_{0,n}\|_{L^1} 
\end{eqnarray}
Gathering this last estimate with \eqref{claimx0} we get that 
$$
 \int_{x_{0,n}(t)}^{+\infty}  y_n(t,s)\, ds 
  \le  \Bigl(2+  \frac{4\sqrt{2}}{c\, \ln 2}  \sqrt{E(u_{0,n})}  \Bigr) \|y_{0,n}\|_{\mathcal M} 
$$
 that leads to \eqref{step2} by making use of the conservation of  $ M(\cdot) $. 
   \eqref{claimy} then follows  by  passing to the limit in $ n$ and  using \eqref{cont2}.\\
  {\it Step 3.} Finally we prove that  for all $t\in [0,t_0]$ and $ n\ge 0 $ large enough,
  \begin{equation}\label{step3}
  \|y_{n}^-(t)\|_{\M]x_0(t)-\frac{c}{8} t, +\infty[} \le  \exp(-\frac{c}{4}t)\|y_{0,n} \|_{L^1} \; .
  \end{equation}
  For this we first notice that since  \eqref{dodo} leads to $ u_{x}(t,\cdot)\ge u(t,\cdot)$ on $ ]-\infty, x_{0}(t)[ $, it holds  
$$
u(t,x) \le e^{x-x_{0}(t)} u(t,x_0(t)) , \quad \quad \forall x\le x_{0}(t) \; .
$$
Therefore,  \eqref{Kep} and  \eqref{sobo} force 
$$
u(t, x) \le \frac{1}{2} u(t,x_0(t)) \le \frac{1}{2} \frac{17}{16} c \le \frac{17}{32} c  , \quad \forall (t,x]\in \R_+\times ]-\infty, x_0(t)-\ln 2]\; .
$$
On the other hand, $ u_x(t)\ge u(t) $ on $ ]-\infty,x_0(t)] $ and \eqref{case2}  forces 
$$
u(t,x_0(t)) \ge \frac{2c}{3} -\frac{c}{16}= \frac{29c}{48}\; , \quad \forall t\ge 0 .
$$
Moreover, \eqref{Kep} ensures that $ u\ge - 2^{-4} c $ on $\R_+\times \R $. 
Taking $ n $ large enough we can thus assume that 
\begin{equation}\label{pl1}
u_n(t, x) \le \frac{9 c}{16}  , \quad \forall (t,x)\in [0,t_0]\times]-\infty, x_{0,n}(t)-\ln 2] ,
\end{equation}
\begin{equation}\label{pl2}
u_n(t, x_{0,n}(t)) \ge \frac{7 c}{12} , \quad \forall t\in [0,t_0] \; . 
\end{equation}
and 
\begin{equation}\label{pl3}
u_n(t, x) \ge -\frac{c}{8} \quad \text{on} \quad [0,t_0]\times \R \; . 
\end{equation}
For any $ t_1\ge 0 $ we define the function $ q_{n,t_1} $ on $ \R_+\times\R $ by 
 \begin{equation}\label{defqn}
  \left\{ 
  \begin{array}{rcll}
  \partial_t q_{n,t_1} (t,x) & = & u_n(t,q_{n,t_1}(t,x))\, &, \; (t,x)\in \R_+\times \R \\
  q_{n,t_1}(t_1,x) & =& x\, & , \; x\in\R \; 
  \end{array}
  \right. .
  \end{equation} 
  $q_{n,t_1} $ is the flow associated with $ u_n $ that satisfies $ q_{n,t_1}(t_1)=Id $. 
 Obviously  $ q_n=q_{n,0} $ and one can easily check that for $ q_{n,t_1} $ \eqref{formula1} becomes 
$$
\partial_x q_{n,t_1}(t,x) =\exp\Bigl( \int_{t_1}^t u_{n,x}(s,q_{n,t_1}(s,x))\, ds\Bigr) , \quad \forall t\ge 0, 
$$
so that \eqref{pl3} and $ u_{n,x}\ge u_n $ on $]-\infty,x_{0,n}(t)] $  ensure that 
\begin{equation}\label{dw}
\partial_x q_{n,t/2}(t,x)\ge \exp(-\frac{c}{16}t), \quad \forall (t,x)\in [0,t_0]\times \R \; .
\end{equation}
 For any $ t\ge 0 $, we denote by $ q_{n,t_1}^{-1}(t,\cdot) $ the inverse mapping of $q_{n,t_1}(t)$.
By a continuity argument, \eqref{pl1}-\eqref{pl2} and \eqref{defq} lead to 
$$
x_{0,n}(t)-q_{n,t/2}(t,x_{0,n}(t/2)-\ln 2) \ge \ln 2 + \frac{c}{96} t \quad .
$$
Therefore, for any $x\in [x_{0,n}(t)-\ln 2 - \frac{c}{96} t,x_{0,n}(t)] $, we have $q^{-1}_{n,t/2}(t,x)\in  [x_{0,n}(t/2)-\ln 2,x_{0,n}]$ and thus 
for all $ t\in [0,t_0] $,  a change of variables along the flow, \eqref{yy}, \eqref{dw} and \eqref{claimx0} lead to
\begin{align*}
-\int_{x_{0,n}(t)-\ln 2 -\frac{c}{96} t,x_{0,n}(t)}^{x_{0,n}(t)} y_n(t,x) \, dx &=
  -\displaystyle \int_{q^{-1}_{n,t/2}(t,x_{0,n}(t)-\ln 2 -\frac{c}{96} t)}^{x_{0,n}(t/2)} y_n(t,q_{n,t/2}(t,\theta)) \partial_x q_{n,t/2}(t,\theta)\, d\theta \\
 & \le -e^{\frac{c}{16}t}\displaystyle \int_{x_{0,n}(t/2)-\ln 2}^{x_{0,n}(t/2)} 
y_n(t,t/2,q_{n,t/2}(t,\theta)) [\partial_x q_{n,t/2}(t,\theta)]^2\, d\theta \\
& \le -e^{\frac{c}{16}t}\displaystyle \int_{x_{0,n}(t/2)-\ln 2}^{x_{0,n}(t/2)}  y_n(t/2,\theta) \, d\theta\\
& \le  e^{\frac{c}{16}t} e^{-\frac{c}{8}t}   \|y_{0,n}\|_{L^1} \le  e^{-\frac{c}{16}t}  \|y_{0,n}\|_{L^1}\;.
\end{align*}
This proves that 
$$
\|y_n^-(t) \|_{L^1(]x_{0,n}(t)-\ln 2 -\frac{c}{96} t, +\infty[)} \le e^{-\frac{c}{16}t}  \|y_{0,n}\|_{L^1}\;.
$$
Now we notice that \eqref{cont2} ensures that $ y_n^-(t)   \rightharpoonup \! \ast \; z $ in $\M$ with $ z\in \M^+(\R) $ and $ 0\le y^-(t)\le z $. Therefore, 
passing to the limit in $ n \to \infty $  and then in $ t_0\to +\infty $, we obtain that 
$$
\|y^-(t) \|_{\M(]x_0(t)-\ln(3/2)-\frac{c}{96}t,+\infty[)}\le  e^{-\frac{c}{16}t} \|y_0\|_{\M} , \quad \forall t\ge 0 .
$$
Finally, \eqref{Kep}, \eqref{sobo} and $ u_x\ge u $ on $ ]-\infty,x_0(t)[ $ ensure that 
\begin{equation}\label{fl4}
x_0(t) \le x(t)+\ln (3/2) , \quad \forall t\ge 0 ,
\end{equation}
 which completes the proof of \eqref{claimy2}. 

Now, in view of \eqref{claimy},  for any sequence $ t_n\nearrow +\infty $, there exists $ \tilde{u}_0 \in Y $ and a subsequence of $ \{t_{n}\} $  (that we still denote by $t_{n} $ to simplify the notation) such that 
\begin{eqnarray}
u(t_{n},\cdot +x(t_{n})) & \to & \tilde{u}_0 \mbox{ in } H^1_{loc}(\R) \label{442} \\  
y(t_{n},\cdot +x(t_{n})) & \rightharpoonup\hspace{-1mm} \ast & \tilde{y}_0= \tilde{u}_0 - \tilde{u}_{0,xx}  \mbox{ in } {\mathcal M}(\R) \; .\label{cvy}
\end{eqnarray}
and \eqref{case2} and \eqref{claimy2} ensure that $ \tilde{y_0} $ is a non negative bounded measure. Finally, according to part 3. of Proposition \ref{prop1}  and \eqref{claimy},  the solution ${\tilde u} $emanating from 
 $ \tilde{u}_0 $ belongs to $ C_b(\R;Y_+) $.   
  To prove the $ Y$-almost localization of the solution $ \tilde{u} $ emanating from $ \tilde{u}_0$, we proceed as in the Case 1.  We first obtain as in Case 1 that there exists 
   a $ C^1$-function   $ \tilde{x} $ such that \eqref{cvx}, \eqref{strongcv} hold  for some subsequence $\{t_{n_k}\} $ of $\{t_n\} $. Moreover, according to the  boundedness of $\{\|y_n\|_{\M}\} $ we may also require that for any $ \phi\in C_0(\R) $ and any $ t\in \R $, 
   \begin{equation}
\dist{y(t_{n_k}+t,\cdot +x(t_{n_k}+t))}{\phi} \rightarrow  \dist{\tilde{y}(t,\cdot+\tilde{x}(t))}{\phi}  \label{weakcvy2}\; .
\end{equation}
But then \eqref{case2}, \eqref{claimy2} force
  \begin{equation}\label{sup}
   \supp  \tilde{y}(t) \subset  [\tilde{x}(t)-\ln 2,+\infty[ , \quad \forall t\in \R \; .
\end{equation}
Now, according to  \eqref{fl4},  $ \supp y^-(t)\subset ]-\infty,x(t)+\ln (3/2)] $ for all $ t\ge 0$. Therefore, for $ R>0 $ big enough, we can thus apply Lemma \ref{almostdecay} to obtain that 
   $ J^R_{r,\gamma}(u(t,\cdot +x(t))) $ is almost non increasing which ensures that \eqref{decR} holds.  Then, proceeding exactly as in  the case 1, but with $ \gamma=0$, we obtain that $ \tilde{u} $ is $ H^1$-almost localized, i.e. for any $ \varepsilon>0 $ there exists $ R_\varepsilon>0 $ such that 
   $$
   G_{o,0}^{R_\varepsilon} (\tilde{u}(t,\cdot+\tilde{x}(t))) <\varepsilon , \quad \forall t\in \R \, .
   $$
  This last estimate together with \eqref{decR} and \eqref{sup} prove the $Y$-localization of $ \tilde{u} $.
    \end{proof}
    \section{Asymptotic stability  results}
    \subsection{Asymptotic stability of a peakon}
   With Proposition \ref{propasym} in hands, the proof of the asymptotic stability of a single peakon follows very closely the proof in \cite{L} since the rest of the proof 
   only uses Theorem \ref{liouville} and the monotonicity results 
    for $\gamma=0$ (i.e. only $E(\cdot) $ is involved). 
   
   Let $ u_0 \in Y$ satisfying  hypothesis \ref{hyp} and \eqref{stab} with $\varepsilon$ defined as in \eqref{defep} and let $ \{t_n\} $ be an increasing sequence that tends to 
   $ +\infty $. Combining  Theorem \ref{liouville} and Proposition \ref{propasym} we infer that there exists a subsequence $ \{t_{n_k}\} \subset  \{t_n\} $, $x_0\in\R $ and $ c_0>0  $ close to $ c$ such that 
   \begin{equation}\label{zz}
 u(t_{n_k},\cdot+x(t_{n_k})) \tendsto{n_k\to +\infty} \tilde{u}_0=\varphi_{c_0}(\cdot-x_0) \mbox { in } H^1_{loc}(\R) 
 \end{equation}
 where $ x(\cdot) $ is a $ C^1$-function satisfying \eqref{ort}, \eqref{estc} and \eqref{fg}. Moreover, \eqref{fg}, \eqref{defep} and \eqref{zz} ensure that  $ |x_0|\ll 1/2 $. 
 
 Since by \eqref{zz}, $\tilde{u}_0 $ satisfies the orthogonality condition \eqref{ort},
 \eqref{unic} forces $ x_0=0 $. On the other hand, \eqref{pp2} ensures that $\displaystyle c_0=\lim_{n\to +\infty} \max_{\R} u(t_n') $ and thus 
  $$
  u(t_n',\cdot+x(t_n')) -\lambda(t_n')\varphi  \tendsto{n\to +\infty} 0  \mbox { in } H^1_{loc}(\R) 
  $$
where we set $
 \lambda(t):=\max_{\R} u(t) , \quad \forall t\in\R $. Since this is the only possible limit, it follows that 
  \begin{equation}\label{pp3}
 u(t,\cdot+x(t))-\lambda(t)\varphi  \tendsto{t\to 0} 0 \mbox { in } H^1_{loc}(\R) 
 \end{equation}
The convergence of the scaling parameter $ \lambda(t) $  and of the derivative function $ \dot{x} $ towards $ c_0 $ at $ +\infty $, as well as the strong $ H^1$-convergence on $ ]\theta t,+\infty[ $ follow exactly as in (\cite{L}, Section 5) and will thus be omitted.
 
  It remains to prove the $ H^1$-convergence in $ ]-\infty,-\theta t[ $. Note that such convergence at the left was not established  in \cite{L}.  In the appendix we complete the result in \cite{L} by proving that for  $u_0\in Y_+ $ the energy at the left of any given point decays to zero as time goes to $+\infty$. In the present case the  desired $H^1$-convergence is a direct consequence of the following monotony result : Let $ \Psi $ be the function defined in \eqref{defPsi}, then for 
any $ z\in \R $, the functional 
$$
\Lambda_z \; : \; t\mapsto  \int_{\R} \Psi(\cdot +\frac{\theta}{2} t -z) (u^2(t)+u_x^2(t))
$$
is not increasing on $ \R_+$. Indeed, one has clearly $ \lim_{z\to-\infty} \Lambda_z(0)=0  $ and for any given $z\in\R$, $ -\theta t <-\frac{\theta}{2} t +z $ for $ t\ge 0 $ large enough. Therefore the monotony of $ \Lambda_z $ ensures that $\|u(t)\|_{H^1(x<\theta t)}\to 0 $ as $ t\to +\infty $ and the result follows since \eqref{cc} obviously forces  as well that $\|\varphi(\cdot-x(t))\|_{H^1(x<\theta t)}\to 0 $ as $ t\to +\infty $. 

By continuity with respect to initial data in $ H^1 $, it suffices prove this monotony result for smooth solutions. Then according to \eqref{go}  it holds 
$$
\frac{d}{dt} \Lambda_z(t) = \theta \int_{\R} (u^2+u_x^2) \Psi'(\cdot +\frac{\theta}{2}t -z) +\int_{\R} u u_x^2  \Psi'(\cdot +\frac{\theta}{2}t -z)
+2\int_{\R} uh  \Psi'(\cdot +\frac{\theta}{2}t -z)
$$
where $ h=(1-\partial_x^2)^{-1}(u^2+u_x^2/2)\ge 0  $. 
  
Now, we notice that by one hand , \eqref{difini}, \eqref{stabo},  \eqref{sobo}  and the positivity of $ \varphi $ yield 
  $$
  u(t) \ge -\frac{\theta^2}{4c} \ge - \frac{\theta}{8} ,\quad \forall t\ge 0 \; .
  $$
  On the other hand, we have   $\int_{\R} h  \Psi'=\int_{\R} (u^2+u_x^2/2) (1-\partial_x^2)^{-1}  \Psi'
$ and that 
  \eqref{psi3} and the positivity  of the kernel associated to $(1-\partial_x^2)^{-1} $   yield 
  $$
(1-\partial_x^2) \Psi' \ge \frac{1}{2} \Psi' \Rightarrow
(1-\partial_x^2)^{-1} \Psi'\le 2  \Psi' \; 
  $$
  so that 
  $$
  \int_{\R} h  \Psi' \le 2 \int_{\R}  (u^2+u_x^2/2)\Psi'\quad  .
  $$
  Gathering these estimates we eventually obtain
  \begin{align*}
\frac{d}{dt} \Lambda_z(t) & \le   \theta  \int_{\R} (u^2+u_x^2) \Psi'(\cdot +\frac{\theta}{2}t -z)-\frac{\theta}{4} \int_{\R} u_x^2\Psi'(\cdot +\frac{\theta}{2}t -z)- \frac{\theta}{2} 
\int_{\R} h  \Psi'(\cdot +\frac{\theta}{2}t -z)\nonumber\\
& \le   \theta  \int_{\R} (u^2+u_x^2) \Psi'(\cdot +\frac{\theta}{2}t -z)-\frac{\theta}{4} \int_{\R} u_x^2\Psi'(\cdot +\frac{\theta}{2}t -z)- \theta 
\int_{\R} (u^2+u_x^2)  \Psi'(\cdot +\frac{\theta}{2}t -z)\nonumber\\
& \le 0 
\end{align*}
which yields the desired  result.
  \subsection{Asymptotic stability of well ordered trains of antipeakons-peakons}
 In \cite{EL3} the orbital stability in $ H^1(\R) $ of  well ordered trains of antipeakons-peakons is established. More precisely, the following theorem \footnote{Actually, in the  statement  given in \cite{EL3}, $ \partial_x \varphi_{c_i} $ appears instead of $ \rho_{n_0} \ast \partial_x \varphi_{c_i} $ in the orthogonality condition \eqref{mod2}. But it was noticed in \cite{L} that then  there is a gap in the proof of the  $ C^1 $-regularity of the functions $ x_i $, $ i=1,..,N$.  The proof of this version uses exactly the same arguments as developed  in the appendix of \cite{L}.}  is proved :
\begin{theorem}[\cite{EL3}]\label{mult-peaks}
Let be given $ N_- $ negative   velocities $ c_{-N_-} <..<c_{-2}<c_{-1}<0 $, $ N_+ $ positive velocities $0<c_1<c_2<..<c_{N_+} $ 
There exist   $ n_0\in\N $ satisfying \eqref{unic}, $A>0 $, $ L_0>0 $
 and $ \varepsilon_0>0 $ such that if  $ u\in C(\R_+;H^1) $ is
   the  solution of (C-H) emanating from $ u_0\in Y $, satisfying Hypothesis \ref{hyp} with 
   \begin{equation}\label{huhu}
 \|u_0-\sum_{j=-N_-\atop j\neq 0}^{N_+}  \varphi_{c_j}(\cdot-z_j^0) \|_{H^1} \le \varepsilon_0^2 
 \end{equation}
 for some  $ 0<\varepsilon<\varepsilon_0$ and $ z^0_{N_-}<z^0_{N_-+1}<\cdot\cdot\cdot<z_{N_+} $ such that 
 $$
  z_j^0-z_{j-1}^0\ge L\ge L_0 ,   \quad \forall  j\in [[-N_- +1, N_+]] \; , 
 $$
  then there exist $ N_-+N_+ $ $C^1$-functions $t\mapsto x_{-N_-}(t), .., x_{-1}(t), x_1(t), ..,t \mapsto x_{N_+}(t) $ uniquely determined such that
\begin{equation}
\sup_{t\in\R+} \|u(t,\cdot)-\sum_{j=-N_-\atop j\neq 0}^{N_+} \varphi_{c_j}(\cdot-x_j(t)) \|_{H^1} \le
A\sqrt{\sqrt{\varepsilon}+L^{-{1/8}}}\;  \label{ini2}
\end{equation}
and 
\begin{equation}
\int_{\R} \Bigl( u(t,\cdot) - \sum_{j=-N_-\atop j\neq 0}^{N_+}\varphi_{c_j}(\cdot- x_j(t)) \Bigr)
(\rho_{n_0}\ast \partial_x \varphi_{c_i}) (\cdot - x_i(t)) \, dx = 0 \; , \quad i\in\{1,..,N\}. \label{mod2}
\end{equation}
Moreover,   for $ i\in [[-N_-,N_+]]/\{0\}$, 
\begin{equation}\label{difdif}
|\dot{x}_i-c_i| \le A \sqrt{\sqrt{\varepsilon}+L^{-{1/8}}}, \quad \forall t\in\R_+  \; .
\end{equation}
 \end{theorem}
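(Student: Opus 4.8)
The plan is to follow the modulation-plus-monotonicity scheme of Martel and Merle (\cite{MM1},\cite{MM2}), adapted to the Camassa--Holm peakons, combining the orbital stability of a single (anti)peakon from \cite{CS1} and \cite{E} with the almost-monotonicity of localized energy (Lemma \ref{almostdecay}) and a localized version of the Constantin--Strauss Lyapunov functionals built from the conserved quantities $E(\cdot)$ and $F(\cdot)$. Throughout I set $\sigma=\sqrt{\sqrt{\varepsilon}+L^{-1/8}}$ and run a bootstrap: on a maximal interval $[0,t^*]$ where the a priori bound $\sup_{[0,t^*]}\|u(t)-\sum_j\varphi_{c_j}(\cdot-x_j(t))\|_{H^1}\le A\sigma$ holds, I aim to improve it to $\tfrac12 A\sigma$, which forces $t^*=+\infty$.

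First I would set up the modulation. On the tube of radius $\delta$ around well-ordered sums of peakons, the implicit function theorem (exactly as in Lemma \ref{modulation}, but now with $N_-+N_+$ parameters and the smoothed directions $\rho_{n_0}\ast\partial_x\varphi_{c_i}$) produces unique $C^1$ functions $x_j(t)$ satisfying \eqref{mod2}; condition \eqref{unic} guarantees their uniqueness and the non-degeneracy of the Jacobian, which is a small off-diagonal perturbation of a fixed invertible diagonal matrix because consecutive peaks are separated by $\ge L$. Differentiating \eqref{mod2} in $t$, inserting \eqref{CH}, and using that $u$ stays $A\sigma$-close to the train, I obtain a linear system for $(\dot x_j-c_j)_j$ whose matrix is nearly diagonal and whose right-hand side is $O(A\sigma)+O(e^{-\kappa L})$; solving it yields \eqref{difdif}. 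The velocity ordering $c_{j-1}<c_j$ together with these speed bounds shows $\frac{d}{dt}(x_j-x_{j-1})\ge \tfrac12(c_j-c_{j-1})>0$, so the gaps are increasing and stay $\ge L/2$, the peaks keep separating, and Hypothesis \ref{hyp} keeps the momentum density signed on each side of $x_0(t)$.

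Next comes the heart of the argument, the localized coercivity. In each gap I place a point $m_j(t)$ moving at the intermediate speed $(c_{j-1}+c_j)/2$ and apply Lemma \ref{almostdecay} (once to the right of $m_j$ and once, via $\Psi(-\cdot)=1-\Psi$, to the left) to control the energy flux across the dividing curves by $K_0 e^{-L/C}$. Using a partition of unity $\{\phi_j\}$ subordinate to these curves I form the localized quantities $E_j(t)=\int(u^2+u_x^2)\phi_j$ and $F_j(t)=\int(u^3+uu_x^2)\phi_j$; global conservation of $E$ and $F$, together with the monotonicity, gives $|E_j(t)-E_j(0)|+|F_j(t)-F_j(0)|\lesssim e^{-L/C}$. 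For each peakon I then invoke the Constantin--Strauss inequality $F_j\le M_j E_j-\tfrac23 M_j^3$, with $M_j$ the local maximum of $u$ near $x_j$ (for the antipeakons one uses the local minimum and $-F$), whose defect controls $|M_j-c_j|$; feeding this into the localized form of the exact identity $E_j-E(\varphi_{c_j})=\|u-\varphi_{c_j}(\cdot-x_j)\|_{H^1}^2+4c_j(M_j-c_j)$ bounds $\|u-\varphi_{c_j}(\cdot-x_j)\|_{H^1(\text{near }x_j)}^2$ by $\sqrt{\varepsilon}+L^{-1/8}$ plus cross terms.

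The main obstacle will be closing the bootstrap, that is, gluing the local $H^1$-controls near each peak into a global estimate with a strictly better constant. The two error sources are the interaction terms between neighbouring peaks and the region away from all peaks. The cross terms are $O(e^{-\kappa L})$ by the exponential decay of the profiles and the separation $\ge L/2$ from the modulation step, hence absorbed into $L^{-1/8}$; the far regions are capped by the monotonicity of the previous step, which limits by $e^{-L/C}$ the energy that can escape the peaks. Summing over $j$ and using that $\{\phi_j\}$ is a partition of unity gives $\|u-\sum_j\varphi_{c_j}(\cdot-x_j)\|_{H^1}^2\lesssim\sqrt{\varepsilon}+L^{-1/8}$, i.e.\ $\le(\tfrac12 A\sigma)^2$ for $A$ large and $L_0,\varepsilon_0^{-1}$ large, closing the bootstrap and proving \eqref{ini2}. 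The delicate points throughout are that the peakon is only Lipschitz, so one argues on the smooth approximations of Proposition \ref{prop1} and passes to the limit, and that, unlike in the single-peakon case, the local extrema $M_j$ must be extracted from the localized functionals rather than from a global maximum.
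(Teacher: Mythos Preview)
This theorem is not proved in the present paper: it is quoted from \cite{EL3} (with the minor modification, flagged in the footnote, that the orthogonality directions are mollified by $\rho_{n_0}$ so that the $C^1$-regularity of the $x_j$ can be justified as in the appendix of \cite{L}). There is therefore no proof here to compare your proposal against.

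That said, your outline is the right one and matches the strategy actually carried out in \cite{EL3} (and in \cite{EL2} for the purely positive case): modulation by the implicit function theorem to produce the $C^1$ centers $x_j$ and the speed estimates, almost-monotonicity of localized energies across curves placed in the gaps, localization of the Constantin--Strauss Lyapunov argument $F_j\le M_jE_j-\tfrac23 M_j^3$ to each bump, and a continuity/bootstrap argument to close \eqref{ini2}. Two small caveats. First, you appeal to Lemma~\ref{almostdecay} directly, but its hypothesis \eqref{loc} is a single-bump smallness condition that a genuine multi-peak solution does not satisfy; in \cite{EL3} the monotonicity lemma is stated for curves $m_j(t)$ running in the gap between two consecutive bumps, where $u$ is indeed uniformly small, and the underlying computation is the same as \eqref{go}--\eqref{nini}. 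Second, Lemma~\ref{almostdecay} as written only controls quantities to the right of $q(t,x_0)$; for the antipeakon half of the train one uses, as you indicate, the symmetry $u(t,x)\mapsto -u(t,-x)$ of \eqref{CH} to transfer the estimates.
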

 Combining this result with the asymptotic stability of a peakon established in the preceding section, we are able to extend the asymptotic result to a train of well ordered 
 antipeakons-peakons by following the strategy developped in \cite{MMT} (see also \cite{EM}).\vspace*{2mm} \\
  {\bf Proof of Theorem \ref{asympt-mult-peaks}.}
 We first notice that it suffices to prove the result for the part of the train travelling at the right of the line $ x=\theta_0 t $, i.e. 
  there exist $ 0< c_1^*<..<c_{N_+}^*  $ and $ C^1$-functions $t\mapsto x_j(t), $,  with  $ \dot{x}_j(t) \to c_j^* $ as $ t\to +\infty $, 
 $ j\in [[1,N_+]]$,  such that 
\begin{equation}\label{mul22}
u-\-\sum_{j=1}^{N_+} \varphi_{c_j^*}(\cdot-x_j(t)) \tendsto{t\to +\infty} 0 \mbox{ in } H^1\Bigl(x>\theta_0 t\Bigr)\; .
\end{equation}
 Indeed, then the result at the left of $ x=-\theta_0 t $  follows by simply using the symmetry $ u(t,x)\mapsto -u(t,-x) $ of the C-H equation. 
 
 As in \cite{MMT} to prove the asymptotic result for the part of the train that travels to the right we proceed by induction starting from the fastest bump.
  More precisely, setting $ z_1(t) =\theta_0 t $ and 
  $$
  z_i(t)=\frac{x_{i-1}(t)+x_i(t)}{2} , \quad \forall  i\in \{2,..,N\} ,
  $$
  we will prove by induction from $ i=N$ to $ i=1 $ that $x_i(t) \to c_i^* $ as $ t\to +\infty $ and that 
  $$
  u(t)-\sum_{j=i}^N \varphi_{c_I^*}(\cdot - x_i(t)) \to 0 \quad \text{ in } H^1(]z_i(t),+\infty[ 
  $$
  Now, we notice that \eqref{huhu} and \eqref{dodo} ensure that $ x_0(t)\le x_1(t)+2 $ and thus $\supp y_-(t)\subset ]-\infty, z_2(t)-1] $. Therefore   applying  the almost monotonicity result, we obtain that the total variation of $ y(t) $ is uniformly  in time bounded on $[z_2(t),+\infty[ $ and thus the N-1 first steps of the induction can be proven exactly as
   in \cite{L} (we only have to replace $y(t) $ by $ \Theta(\cdot-z_2(\cdot)) y $ where $ \Theta $ is a continuous non decreasing function that equal $ 1 $ on 
    $[1,+\infty[$ and vanishes on $\R_-$.
    
    It remains to tackle the slowest bump travelling to the right. By separating the two same cases as in the preceding section and using the monotonicity result at the right of  $x_1(t) $    that is proven in [\cite{L}, Lemma 6.2] (Note that we can directly apply this lemma here since $\supp  y_-(t) \subset]-\infty, x_1(t)+2] $ ), we obtain in the same way  
    an asymptotic object that gives rise to an $ Y_+$ almost localized solution of \eqref{CH}.  The rest of the proof follows exactly the same line as in \cite{L} since it does not used the non negativity of  the momentum  density.
\subsection{Asymptotic stability of a  not well-ordered train of antipeakons-peakons}
Finally we notice that we can drop the hypothesis that the set of peakons and the set of anti-peakons are each one well-ordered. Note anyway that it is crucial for us that the set of antipeakons are at the left of the set of peakons since otherwise our initial datum cannot satisfy Hypothesis \ref{hyp}.
To do so, we make use of the fact that Camassa-Holm equation possesses special solutions called multipeakons given by 
 $$
 u(t,x)=\sum_{i=1}^N p_i(t) e^{-|x-q_i(t)|} 
 $$
 where $ (p_i(t),q_i(t)) $, $ i=1,..,N$,  satisfy a differential hamiltonian system (cf. \cite{CH1}). In \cite{Beals0} (see also \cite{CH1}), the limit as $ t\to \mp \infty $ of $p_i(t) $ and $ \dot{q}_i(t) $, $ i=1,..,N$, are determined. Combining the orbital stability of well ordered train of antipeakons-peakons, the continuity with respect to initial data in $ H^1(\R) $ and the asymptotics of multipeakons, the $ H^1$-stability of the variety 
defined for $ (N_-,N_+)\in (\N^*)^2 $ by  
\begin{align*}
{\mathcal N}_{N_-,N_+}:&= \Bigl\{ v=\sum_{j=-N_-\atop j\neq 0 }^{N_+} p_j e^{-|\cdot-q_j|}, \\
&(p_{-N_-},..,p_{-1},p_1,..,p_{N_+})\in (\R_-^*)^{N_-}\times (\R_+^*)^{N+} ,q_{N_-}<q_{-1}<q_1<..<q_N \,\Bigr\} \; .
\end{align*}
 is proved in (\cite{EL3}, Corollary 1.1). Gathering this last result  with the asymptotics of the multipeakons and Theorem \ref{asympt-mult-peaks}, the following asymptotic stability result for not well ordered train of peakons can be deduced quite  directly.
\begin{corollary} \label{cor-mult-peaks}
Let be given $ N_- $ negative  real numbers $ c_{-N_-} <..<c_{-2}<c_{-1}<0 $,  $ N_+ $ positive  real numbers $0<c_1<c_2<..<c_{N_+} $,  
 $N_-+N_+ $  real numbers $ q_{-N_-}^0<..<q_{-1}^0<q_1^0< ..< q_N^0 $ and let $ \lambda_{-N_-}<\cdot\cdot<\lambda_{-1}<0<\lambda_1<\cdot\cdot<\lambda_{N_+} $ be the
 $ N_-+N_+$  distinct eigenvalues of the matrix $ (p_j^0 e^{-|q_i^0-q_j^0|/2})_{(i,j)\in ([[-N_{-}, N_+]]/\{0\})^2} $. 
For any $ B> 0 $ there exists a positive function $ \varepsilon $  with $ \varepsilon(y) \to 0 $ as $ y\to 0 $ and $ \alpha_0>0 $ such that if $ u_0\in
Y$ satisfies the Hypothesis \ref{hyp} with
\begin{equation}
\|m_0\|_{\mathcal M}\le B \quad  \mbox{ and }\quad
\|u_0-\sum_{j=-N_-\atop j\neq 0}^{N_+} p_j^0 \exp (\cdot-q_j^0) \|_{H^1}\le \alpha
\label{ini3}
\end{equation}
for some $ 0<\alpha<\alpha_0 $, 
 then there exists $c_{-N_-}^*<\cdot\cdot<c_{-1}^*<0<c_1^*<\cdot\cdot <c_{N_+}^*  $  and $ C^1$-functions $ (x_{-N_-},..,x_{-1},x_1,..,x_{N_+}) $ with 
 $$
 |c_i^*-\lambda_i|\le \varepsilon(\alpha) \quad \text{and} \quad \lim_{t\to +\infty} \dot{x}_i(t) = c_i^* \; ,\quad \forall i\in [[-N_{-}, N_+]]/\{0\},
 $$
    such that 
\begin{equation}\label{coromul2}
u-\sum_{j=-N_-\atop j\neq 0 }^{N_+}  \varphi_{c_i^*}(\cdot-x_i(t)) \tendsto{t\to +\infty} 0 \mbox{ in } H^1(|x|>\min(-\lambda_{-1},\lambda_1)/4)\; .
\end{equation}
\end{corollary}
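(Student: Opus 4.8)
\textbf{Proof of Corollary \ref{cor-mult-peaks} (plan).}
The idea is to reduce the not-well-ordered configuration to the well-ordered one settled in Theorem~\ref{asympt-mult-peaks}, by comparing $u$ with the \emph{explicit} solution $U$ of \eqref{CH} issued from $U_0=\sum_j p^0_j e^{-|\cdot-q^0_j|}$ and using the known asymptotics of $U$. Since $q^0_{-N_-}<\dots<q^0_{-1}<q^0_1<\dots<q^0_N$ with $p^0_j<0$ for $j<0$ and $p^0_j>0$ for $j>0$, the datum $U_0$ lies in $Y$ and satisfies Hypothesis~\ref{hyp}; hence, by Proposition~\ref{prop1}, there is a unique global solution $U$ (which, by uniqueness, is the multipeakon produced by the Hamiltonian ODE system of \cite{CH1}), Hypothesis~\ref{hyp} is propagated, and by \eqref{yL1} the total variation of $U(t)-U_{xx}(t)$ stays bounded on every compact time interval. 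Writing $U(t,\cdot)=\sum_j p_j(t)e^{-|\cdot-q_j(t)|}$, the asymptotics of \cite{Beals0} (see also \cite{CH1}) give, after labelling the bumps so that the spectral values $\lambda_j$ appear in increasing order, that $q_j\in C^1$, $p_j(t)\to\lambda_j$, $\dot q_j(t)\to\lambda_j$, and that all consecutive gaps of the $q_j(t)$ tend to $+\infty$ as $t\to+\infty$. Consequently, for any $\delta>0$ and $L>0$ there is $T=T_{\delta,L}\ge 0$ with
\begin{equation}\label{propsk1}
\Bigl\|U(T,\cdot)-\sum_j\varphi_{\lambda_j}\bigl(\cdot-q_j(T)\bigr)\Bigr\|_{H^1}<\delta ,
\end{equation}
and all consecutive gaps $q_j(T)-q_{j-1}(T)$ larger than $L$.

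Fix now any $\theta_0$ with $0<\theta_0<\min(-\lambda_{-1},\lambda_1)/4$ and let $L_0,\varepsilon_0>0$ be the constants produced by Theorem~\ref{asympt-mult-peaks} for the velocities $(\lambda_j)_j$ and this $\theta_0$; apply \eqref{propsk1} with $\delta=\varepsilon_0^2/4$ and $L=2L_0$, getting a time $T$. Next one proves the continuity-in-time estimate: for this fixed $T$ and the given $B$, whenever $u_0\in Y$ satisfies Hypothesis~\ref{hyp}, $\|u_0-(u_0)_{xx}\|_{\M}\le B$ and $\|u_0-U_0\|_{H^1}\le\alpha$, then
\begin{equation}\label{propsk2}
\sup_{t\in[0,T]}\|u(t)-U(t)\|_{H^1}\le\omega_{B,T}(\alpha)
\end{equation}
for some modulus $\omega_{B,T}(\alpha)\to 0$ as $\alpha\to0$; this is a standard compactness argument from part~\textbf{2} of Proposition~\ref{prop1} (the data at hand are bounded in $Y$ by the $B$-bound, and the momentum densities of $u$ and $U$ are controlled on $[0,T]$ by \eqref{yL1}, so that the $W^{1,1}$-Lipschitz estimate \eqref{lip} is applicable). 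Picking $\alpha_0$ so that $\omega_{B,T}(\alpha_0)<\varepsilon_0^2/4$, we get from \eqref{propsk1}--\eqref{propsk2} that, for $0<\alpha<\alpha_0$, $u(T,\cdot)$ is $(\varepsilon_0^2/2)$-close in $H^1$ to $\sum_j\varphi_{\lambda_j}(\cdot-q_j(T))$ with all consecutive gaps larger than $2L_0$. Since Hypothesis~\ref{hyp} is propagated, $u(T)$ meets the hypotheses of Theorem~\ref{asympt-mult-peaks}; applying the latter to $t\mapsto u(T+t)$ with reference velocities $(\lambda_j)_j$ yields $c^*_{-N_-}<\dots<c^*_{N_+}$ with $|c^*_j-\lambda_j|\ll|\lambda_j|$ and $C^1$-functions $x_j$ with $\dot x_j(t)\to c^*_j$, for which \eqref{mul1} holds and \eqref{mul2} holds on $\{|x|>\theta_0(t-T)\}\supset\{|x|>\theta_0 t\}$; the conclusion \eqref{coromul2} then follows, the part of the region $\{|x|>\min(-\lambda_{-1},\lambda_1)/4\}$ not yet covered by \eqref{mul2} being dealt with exactly as in the single-peakon case via the monotonicity functionals $\Lambda_z$ of Section~5.1 (here $\supp y^+(t)$ and $\supp y^-(t)$ stay, respectively, to the right and to the left of the origin, and the bumps escape every bounded set, so the energy over that middle region tends to $0$).

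To obtain the \emph{quantitative} bound $|c^*_j-\lambda_j|\le\varepsilon(\alpha)$ with $\varepsilon(\alpha)\to0$, one lets the comparison time grow as $\alpha\to0$: for $\alpha$ small one takes $T=T(\alpha)\nearrow+\infty$ and $\delta(\alpha)\searrow 0$ in \eqref{propsk1} (legitimate because $U(t)$ tends to a well-separated train of $\varphi_{\lambda_j}$'s), then $\alpha_0(\alpha)$ small enough that \eqref{propsk2} makes $u(T(\alpha))$ be $O(\delta(\alpha))$-close to that train with separations $\ge L(\alpha)\to+\infty$; the orbital stability bound \eqref{difdif} of Theorem~\ref{mult-peaks}, applied with reference velocities $(\lambda_j)_j$, then gives $|\dot x_j(t)-\lambda_j|\le A\,(\delta(\alpha)^{1/4}+L(\alpha)^{-1/8})^{1/2}$ for all $t$, and letting $t\to+\infty$ along \eqref{mul1} yields $|c^*_j-\lambda_j|\le\varepsilon(\alpha):=A\,(\delta(\alpha)^{1/4}+L(\alpha)^{-1/8})^{1/2}$. (One could instead compare the Camassa--Holm conserved integrals of $u_0$ and $U_0$, which agree up to $O(\omega_B(\alpha))$ and determine, asymptotically, the power sums $\sum_j(c^*_j)^k$ and $\sum_j\lambda_j^k$; the two ordered families are then $\varepsilon(\alpha)$-close.) The genuinely delicate point in the whole scheme is the interplay between $T$ and $\alpha$ in \eqref{propsk2}: the continuity with respect to initial data degrades exponentially in $T$ through \eqref{lip}--\eqref{yL1}, so $\alpha_0$ must be chosen after $T$ and $T(\alpha)$ may grow only logarithmically in $1/\alpha$; granted this bookkeeping, the corollary is just the assembling of \cite{Beals0}, the orbital stability of well-ordered trains proven in \cite{EL3} (Theorem~\ref{mult-peaks} and Corollary~1.1) and Theorem~\ref{asympt-mult-peaks}.
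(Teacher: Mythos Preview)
Your argument is correct and follows precisely the route sketched in the paper: combine the explicit asymptotics of the multipeakon $U$ from \cite{Beals0}, the orbital stability of well-ordered antipeakon--peakon trains from \cite{EL3}, and Theorem~\ref{asympt-mult-peaks}, using continuous dependence on the data to transfer the well-separated configuration of $U(T)$ to $u(T)$. The only minor variation is that the paper invokes the orbital stability of the whole variety $\mathcal N_{N_-,N_+}$ (Corollary~1.1 of \cite{EL3}) to keep $u$ close to $U$, whereas you obtain the comparison at time $T$ directly from the $H^1$-continuity of the flow (Proposition~\ref{prop1}, part~\textbf{2}); both devices serve the same purpose, and your explicit bookkeeping of the $T$--$\alpha$ interplay and the extraction of the quantitative bound $|c_j^*-\lambda_j|\le\varepsilon(\alpha)$ via \eqref{difdif} is a welcome elaboration of what the paper leaves implicit.
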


    \section{Appendix: An improvement of the asymptotic stability result in the class of solutions with non negative momentum density} 
 In this subsection we make a simple observation that enables to improve the asymptotic stability result given in \cite{L}.
 This observation is that all  the energy of  the solutions of the C-H equation, that have a non negative density momentum,
   is traveling to the right. Moreover, as stated in the following lemma,  the energy at the left of any fixed point tends to zero as $ t\to +\infty $.
   \begin{lemma}\label{apen}
   For any $ u_0\in Y_+ $ and any $ z\in\R  $, denoting by $ u\in C(\R;H^1) $  solution of \eqref{CH} emanating from $ u_0$  it holds 
 \begin{equation}\label{tdt}
\lim_{t\to+\infty} \|u(t)\|_{H^1(]-\infty, z[)} = 0 \; .
 \end{equation}
 \end{lemma}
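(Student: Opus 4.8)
The plan is to exploit the fact, established via the monotonicity machinery of Lemma \ref{almostdecay} (here in the simpler case $\gamma=0$, only the energy being involved, and with the curve $x(\cdot)$ replaced by a straight line), that for a solution with non negative momentum density all the energy travels to the right. Since $u_0\in Y_+$, the momentum density $y(t)=u(t)-u_{xx}(t)$ stays a non negative finite measure for all times and its total variation $\|y(t)\|_{\M}=M(u_0)$ is conserved; moreover \eqref{dodo} degenerates to $u_x(t,\cdot)\ge -u(t,\cdot)$ on all of $\R$ (take $x_0=-\infty$), and $u(t)\ge 0$ on $\R$ since $u=p\ast y$ with $y\ge0$. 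In particular $-u_x\le u$ everywhere, so $\sup_\R(-u_x(t,\cdot))\le \|u(t)\|_{L^\infty}\le\sqrt{E(u_0)/2}$ by \eqref{sobo}.

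First I would fix a speed $\sigma>0$ to be chosen small, and consider, for $z\in\R$, the functional
\[
\Lambda_{z}(t)=\int_{\R}\Psi\!\left(x+\sigma t-z\right)\bigl(u^2(t,x)+u_x^2(t,x)\bigr)\,dx ,
\]
with $\Psi$ as in \eqref{defPsi}. Using \eqref{go} with $g(t,x)=\Psi(x+\sigma t-z)$, one gets
\[
\frac{d}{dt}\Lambda_z(t)=\sigma\int_{\R}\Psi'(u^2+u_x^2)+\int_{\R}u\,u_x^2\,\Psi'+2\int_{\R}u\,h\,\Psi',
\]
where $h=(1-\partial_x^2)^{-1}(u^2+u_x^2/2)\ge0$ since $u^2+u_x^2/2\ge0$. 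The term $\int u u_x^2\Psi'$ is controlled because $\Psi'\ge0$, $u\ge0$, and — this is the point where non negativity of $y$ enters decisively — one bounds $u$ on the bulk of the support of $\Psi'$ using the smallness of the energy there coming from the previous steps, exactly as in the peakon case; but in fact, since here we only want a one–sided (decay-to-the-left) statement and not a localization statement, a cleaner route is available: as in the last display of Section 5.1, use \eqref{psi3} together with the positivity of the kernel of $(1-\partial_x^2)^{-1}$ to get $(1-\partial_x^2)^{-1}\Psi'\le 2\Psi'$, hence $\int u h\Psi'=\int(u^2+u_x^2/2)(1-\partial_x^2)^{-1}\Psi'\le 2\int(u^2+u_x^2/2)\Psi'$. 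Combining and using $0\le u\le\sqrt{E(u_0)/2}$, one obtains an inequality of the form
\[
\frac{d}{dt}\Lambda_z(t)\le\Bigl(\sigma+C\sqrt{E(u_0)}\Bigr)\int_{\R}(u^2+u_x^2)\,\Psi' ;
\]
this is the wrong sign, so the genuinely monotone object is instead the complementary one. I would therefore work with $t\mapsto\int_\R(1-\Psi(x+\sigma t-z))(u^2+u_x^2)\,dx$, or equivalently run the argument of Lemma \ref{almostdecay}, inequality \eqref{mono2}, with $z(t)=\sigma t$: for $\sigma$ small enough relative to $\sqrt{E(u_0)}$ one gets that for every fixed $a\in\R$,
\[
\int_{-\infty}^{a}(u^2(t)+u_x^2(t))\,dx\ \longrightarrow\ 0\quad\text{as }t\to+\infty,
\]
because the energy to the left of the line $x=\sigma t+a$ is almost non increasing while the total energy $E(u_0)$ is conserved and, by the concentration/localization properties of such solutions (all the energy being asymptotically carried past any fixed line moving to the right at speed $\sigma$), the mass to the left of a line of speed $\sigma$ must vanish. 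Since $z$ is fixed and $\sigma t\to+\infty$, for $t$ large $]-\infty,z[\subset\,]-\infty,\sigma t+a[$ for any chosen $a$, and \eqref{tdt} follows.

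The main obstacle is justifying that the almost-monotone quantity actually tends to $0$ and not merely to some positive limit, i.e. that there is no residual energy left behind to the left of every slowly right-moving line. The clean way to handle this is a contradiction argument in the spirit of the proof of \eqref{po2}: if a positive amount $\varepsilon_0$ of energy stayed to the left of $x=\sigma t$ along a sequence of times, one iterates the monotonicity inequality \eqref{mono2} along a well-separated subsequence of times to force the energy to the right of a fixed line to grow without bound, contradicting the conservation of $E$. One subtlety to watch is that Lemma \ref{almostdecay} as stated is tailored to solutions close to a peakon (through \eqref{loc}); for a general $u_0\in Y_+$ one should instead invoke the version of the monotonicity result proved in \cite{L} for arbitrary solutions with $y\in\M_+$, using only that $-u_x\le u$ and $\|u(t)\|_{L^\infty}\le\sqrt{E(u_0)/2}$, which is exactly the input needed. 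Finally, one reduces to smooth solutions by the $H^1$-continuity with respect to initial data, \eqref{cont1}, since the functional $\Lambda_z$ and its time derivative depend continuously on $u$ in $C([0,T];H^1)$.
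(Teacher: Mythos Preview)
Your monotonicity heuristic is in the right spirit, but there is a genuine gap at the decisive step --- showing that the almost-monotone quantity actually tends to $0$ and not merely to some limit $\ell\ge 0$. Your proposed contradiction (``iterate \eqref{mono2} along a well-separated subsequence to force the energy to the right of a fixed line to grow without bound'') does not work here: that mechanism, as in the proof of \eqref{po2}, relies on a \emph{three-piece} splitting of the conserved quantity (left of bump / bump / right of bump), where a drop in the middle piece forces a gain in the outer ones and both outer pieces are almost monotone. For a generic $u_0\in Y_+$ there is no bump and no third region; if $\ell>0$ of energy sits to the left of $x=a$, then exactly $E(u_0)-\ell$ sits to the right, and nothing grows. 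Also, the ``complementary'' functional you write down, $\int_{\R}(1-\Psi(x+\sigma t-z))(u^2+u_x^2)\,dx$, measures the energy to the left of a line moving to the \emph{left} at speed $\sigma$; its decay is trivial and says nothing about a fixed abscissa $z$. Finally, the hypotheses you list as ``exactly the input needed'' (namely $-u_x\le u$ and the $L^\infty$ bound) do not suffice to run the splitting $J_1=J_{11}+J_{12}$ in Lemma~\ref{almostdecay} without \eqref{loc}; what you actually need, and never invoke explicitly, is $u\ge 0$ (so that $\int uu_x^2\Psi'\ge 0$ and $\int uh\Psi'\ge 0$), which gives monotonicity directly but still leaves you stuck at the limit $\ell$.

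The paper's proof avoids all of this by a different and self-contained argument. For each $0<\gamma<\|u_0\|_{H^1}^2$ it defines $x_\gamma(t)$ by $\int_{\R}(u^2+u_x^2)(t)\,\Phi(\cdot-x_\gamma(t))=\gamma$ and shows, via \eqref{go} together with Constantin's convolution inequality $p\ast(v^2+v_x^2/2)\ge \tfrac12 v^2$, that
\[
\dot{x}_\gamma(t)\ \ge\ \tfrac12\Bigl(\int_{\R}u^2(t)\,\Phi'(\cdot-x_\gamma(t))\,dx\Bigr)^{1/2}\ >\ 0 .
\]
Thus $x_\gamma$ is increasing; if its limit $x_\gamma^\infty$ were finite, integrating the displayed inequality forces $\int_{x_\gamma^\infty}^{x_\gamma^\infty+2}u^2(t_n)\,dx\to 0$ along a sequence, hence $u(t_n,x_n)\to 0$ for some $x_n\in[x_\gamma^\infty,x_\gamma^\infty+2]$. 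The pointwise propagation estimate $u(t,x)\le e^{|x-x_0|}u(t,x_0)$, which follows from $|u_x|\le u$ for $Y_+$-solutions, then makes $u(t_n,\cdot)$ uniformly small on any compact set, contradicting the definition of $x_{\gamma'}$ for $\gamma'>\gamma$. These two tools --- the convolution lower bound and the $|u_x|\le u$ propagation --- are the missing ideas in your plan; without them, or something playing their role, your contradiction step cannot close.
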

 This differs from KdV like equations since for these last equations the linear waves are travelling to the left. This causes a small loss of energy to the left and the so called dispersive tail. Note that for CH-type equation, there is no linear part and if moreover  the momentum density is non negative, there is also no (even small)  antipeakon and thus all the energy is travelling to the right that  formally leads to \eqref{tdt}.\\
 Before proving Lemma \ref{tdt}, let us state the improved asymptotic stability result : 
   \begin{theorem} \label{asympstab+} 
Let $ c>0 $ be fixed. There exists a  universal constant $0<\eta_0\ll 1  $ such that for any $ 0<\theta<c $ and any $ u_0\in Y_+ $ satisfying 
\begin{equation}\label{difini+}
\|u_0-\varphi_c \|_{H^1} \le \eta_0 \Bigl(\frac{\theta}{c}\Bigr)^{8}\; ,
\end{equation}
there exists $ c^*>0 $ with $ |c-c^*|\ll c $ and a $C^1$-function $ x \, : \, \R \to \R $ 
 with $ \displaystyle\lim_{t\to \infty} \dot{x}=c^* $  such that
\begin{equation}
u(t,\cdot+x(t)) \weaktendsto{t\to +\infty} \varphi_{c^*} \mbox{ in } H^1(\R) \; ,
\end{equation}
where $ u\in C(\R; H^1) $ is the solution emanating from $ u_0 $.\\
Moreover, for any  $ z>0 $, 
\begin{equation}\label{cvforte+}
\lim_{t\to +\infty} \|u(t) -\varphi_{c^*}(\cdot-x(t))\|_{H^1(\R/[z, \theta t])}=0 \; .
\end{equation}
\end{theorem}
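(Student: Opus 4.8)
The plan is to read off Theorem~\ref{asympstab+} from the asymptotic stability result for non‑negative momentum densities proved in \cite{L}, sharpened by the decay Lemma~\ref{apen}. First I would fix the universal constant $\eta_0$ to be no larger than the admissibility threshold of that result (and small enough that \eqref{stabo} holds), so that \eqref{difini+} brings $u_0\in Y_+$ into its scope; it then provides $c^*>0$ with $|c^*-c|\ll c$, a $C^1$ modulation $x:\R\to\R$ with $\dot x(t)\to c^*$, the weak convergence $u(t,\cdot+x(t))\rightharpoonup\varphi_{c^*}$ in $H^1(\R)$, and the strong convergence on the right,
$$
\|u(t)-\varphi_{c^*}(\cdot-x(t))\|_{H^1(]\theta t,+\infty[)}\tendsto{t\to+\infty}0\,.
$$
(Equivalently, one reruns the scheme of Sections~4--5 for $Y_+$ data: modulation via Lemma~\ref{modulation}, extraction of a $Y$‑almost localized asymptotic object with values in $Y_+$ as in Proposition~\ref{propasym}, and rigidity via Theorem~\ref{liouville}.) After this, every assertion of Theorem~\ref{asympstab+} is settled except the part of \eqref{cvforte+} concerning the half‑line $]-\infty,z[$.

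To obtain that remaining piece, note first that $\dot x(t)\to c^*>0$ forces $x(t)\to+\infty$: there is $T$ with $\dot x(t)\ge c^*/2$ for $t\ge T$, so $x(t)\ge x(T)+\frac{c^*}{2}(t-T)$. Fix $z>0$. For $t$ so large that $x(t)>z$, the profile $\varphi_{c^*}(\cdot-x(t))$ is exponentially small on $]-\infty,z[$; indeed, since $\varphi_{c^*}(s)=c^*e^{-|s|}$, a direct computation gives
$$
\|\varphi_{c^*}(\cdot-x(t))\|_{H^1(]-\infty,z[)}^2=(c^*)^2\,e^{-2(x(t)-z)}\tendsto{t\to+\infty}0\,.
$$
On the other hand Lemma~\ref{apen} gives $\|u(t)\|_{H^1(]-\infty,z[)}\to0$, so the triangle inequality yields $\|u(t)-\varphi_{c^*}(\cdot-x(t))\|_{H^1(]-\infty,z[)}\to0$. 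Since $z<\theta t$ for $t$ large, $\R/[z,\theta t]=\,]-\infty,z[\,\cup\,]\theta t,+\infty[$ is a disjoint union of open intervals, so
$$
\|u(t)-\varphi_{c^*}(\cdot-x(t))\|_{H^1(\R/[z,\theta t])}^2=\|u(t)-\varphi_{c^*}(\cdot-x(t))\|_{H^1(]-\infty,z[)}^2+\|u(t)-\varphi_{c^*}(\cdot-x(t))\|_{H^1(]\theta t,+\infty[)}^2\,,
$$
and both terms on the right tend to $0$. This proves \eqref{cvforte+} and finishes the proof of Theorem~\ref{asympstab+}.

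Granted \cite{L} and Lemma~\ref{apen}, the above is essentially bookkeeping, so the genuine work is concentrated in Lemma~\ref{apen}, which I would establish separately and which is where I expect the main obstacle. The difficulty is that the monotonicity statements of Section~3 only yield near‑monotonicity of the $\Psi$‑weighted energy to the \emph{right} of a rightward‑travelling curve, whereas Lemma~\ref{apen} asks for decay to the left of a \emph{fixed} point $z$, and without any smallness hypothesis on $u_0$. The point I would exploit is that a $Y_+$ solution has $u=p\ast y\ge0$ (convolution of the non‑negative kernel $p$ with the non‑negative measure $y$), hence $|u_x|\le u$, so it carries no antipeakon part; combining this with $h:=(1-\partial_x^2)^{-1}(u^2+u_x^2/2)\ge0$ and the comparison $(1-\partial_x^2)^{-1}\Psi'\le2\Psi'$ coming from \eqref{psi3}, one should be able to run a variant of the $\Lambda_z$‑type computation used above in the single‑peakon case, with a $\Psi$‑weight whose transition region is dragged in the favourable direction at a suitable fixed speed; letting the anchoring parameter of the weight tend to $-\infty$ at the initial time and using the conservation of $E$ then forces $\|u(t)\|_{H^1(]-\infty,z[)}\to0$ for every fixed $z$. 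The delicate step there is to check that the drift term genuinely controls the nonlinear and nonlocal contributions along the fixed‑speed weight, so that a fixed point $z$—rather than one escaping to the right—can actually be reached.
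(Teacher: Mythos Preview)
Your reduction of Theorem~\ref{asympstab+} to the result of \cite{L} together with Lemma~\ref{apen} is correct and is exactly what the paper does: the theorem is stated immediately after the lemma with no separate proof, so the bookkeeping you spell out (strong convergence on $]\theta t,+\infty[$ from \cite{L}, then $\|u(t)\|_{H^1(]-\infty,z[)}\to 0$ from Lemma~\ref{apen} combined with the exponential decay of $\varphi_{c^*}(\cdot-x(t))$ on $]-\infty,z[$, triangle inequality, disjoint union) is precisely the intended argument.

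Where your proposal diverges from the paper is the sketch for Lemma~\ref{apen} itself. The paper does \emph{not} run a $\Lambda_z$-type monotonicity with a weight translated at a prescribed speed. Instead it introduces, for each level $0<\gamma<\|u_0\|_{H^1}^2$, the function $x_\gamma(t)$ defined implicitly by $\int(u^2+u_x^2)(t)\,\Phi(\cdot-x_\gamma(t))=\gamma$, and shows $x_\gamma(t)\to+\infty$. Differentiating and using \eqref{go} gives $\dot x_\gamma\int(u^2+u_x^2)\Phi'=\int u\,u_x^2\Phi'+2\int u\,h\,\Phi'$; then $u\ge 0$, $|u_x|\le u$, the lower bound $h=p\ast(u^2+u_x^2/2)\ge\frac12 u^2$, and H\"older yield $\dot x_\gamma(t)\ge\frac12\bigl(\int u^2\Phi'(\cdot-x_\gamma(t))\bigr)^{1/2}>0$. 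If $x_\gamma^\infty<\infty$, the integrated inequality forces $u(t_n,\cdot)\to 0$ near $x_\gamma^\infty$ along a sequence, and the pointwise control $u(t,x)\le e^{|x_0-x|}u(t,x_0)$ (from $|u_x|\le u$) propagates this to uniform decay on large compacta; comparing with a second level $\gamma'>\gamma$ then yields a contradiction.

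The obstacle you anticipate in your approach is genuine and is the reason the paper avoids it. For a weight translated at fixed speed $v$, the terms $\int u\,u_x^2\Psi'$ and $2\int u\,h\,\Psi'$ are dominated by $v\int(u^2+u_x^2)\Psi'$ only when $u$ is small on the support of $\Psi'$; in Section~5.1 this smallness comes from orbital stability near the peakon, but Lemma~\ref{apen} is stated for arbitrary $u_0\in Y_+$, so no uniform $v$ is available. The level-set device sidesteps this entirely: $x_\gamma$ moves at a data-driven speed, and its positivity follows from $u\ge 0$ alone, with no smallness needed.
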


 \noindent
 {\bf Proof of Lemma \ref{apen}.}
 Let $ 0<\gamma<\|u_0\|_{H^1}^2$ and let $ x_\gamma \, :\, \R\to \R $ be defined by 
 \begin{equation}\label{defxg}
 \int_{\R} (u^2+u_x^2)(t) \Phi(\cdot -x_\gamma(t)) =\gamma 
 \end{equation}
 with $ \Phi $  defined in \eqref{defPhi}.
 Note that $ x_\gamma(\cdot) $ is well-defined  since $ u_0\in Y_+ $ forces $ u>0 $ on $ \R^2 $ and thus for any fixed $ t\in \R $, 
 $ z\mapsto \int_{\R} (u^2+u_x^2)(t) \Phi(\cdot-z) $ is a decreasing continuous bijection from $ \R $ to $ ]0,\|u_0\|_{H^1}^2[ $. Moreover, $ u\in C(\R; H^1) $ ensures that $ x_\gamma(\cdot) $ is a continuous function.  \eqref{tdt} is clearly a direct consequence of the fact that 
  \begin{equation}\label{td3}
 \lim_{t\to +\infty} x_\gamma (t)=+\infty \; .
 \end{equation}
 To prove  \eqref{td3} we first claim that for any $ t\in \R $ and any $ \Delta>0 $ it holds 
 \begin{equation}\label{td4}
 x_\gamma(t+\Delta)-x_\gamma(t) \ge \frac{1}{2}\Bigl(\int_{t}^{t+\Delta} \int_{x_\gamma(t)}^{x_\gamma(t)+2} u^2(\tau,s) \, ds\, d\tau  \Bigr)^{1/2}>0 \; .
 \end{equation}
 Let us prove this claim. First we notice that by continuity with respect to initial data, it suffices to prove \eqref{td4} for $ u\in C^\infty(\R; H^\infty)
 \cap L^\infty(\R; Y_+)  $. Then a simple application of the implicit function theorem ensures that $ t\mapsto x_\gamma (t) $ is of class $ C^1 $. Indeed, 
 $$
 \psi \, :\, (z,v)\mapsto \int_{\R} (v^2+v_x^2) \Phi(\cdot-z) 
 $$
 is of class $ C^1 $ from $ \R \times H^1(\R) $ into $ \R $ and for any $ (z_0,v)\in \R\times Y_+/\{0\} $, 
 $\partial_z \psi(z_0,v)=  \int_{\R}  (v^2+v_x^2)\Phi'(\cdot-z_0)>0 $.   It then follows from \eqref{go} that
 $$
   \dot{x}_\gamma(t) \int_{\R} (u^2 +u_x^2)\Phi'(\cdot-x_\gamma(t))= \int_{\R} u u_x^2 \Phi'(\cdot-x_\gamma(t)) +2 \int_{\R} u h  \Phi'(\cdot-x_\gamma(t))
$$
 where $ h=p\ast (u^2+u_x^2/2) $. Now, it is worth noticing that according to \cite{C}, the following convolution estimate holds : For any $ v\in H^1(\R) $, 
   \begin{equation}\label{td5}
 p\ast (v^2+v_x^2/2) (x)\ge \frac{1}{2} v^2 \quad \text{ on } \R.
 \end{equation}
 Combining \eqref{td4}-\eqref{td5} and the fact that $ |v_x|\le v $ on $ \R $ for any $ v\in Y_+ $,  we eventually get 
 $$
2\dot{x}_\gamma (t) \int_{\R} u^2 \Phi'(\cdot-x_\gamma(t))  \ge \int_{\R} u(u^2+u_x^2) \Phi'(\cdot-x_\gamma(t))\ge
  \int_{\R} u^3 \Phi'(\cdot-x_\gamma(t)) 
  $$
and H\"older inequality together with the fact that $\Phi' $ is a non negative function of total mass 1 lead to
 \begin{equation}\label{td6}
 \dot{x}_\gamma(t) \ge \frac{1}{2}\Bigl( \int_{\R} u^2 \Phi'(\cdot-x_\gamma(t))\Bigr)^{1/2}
 \end{equation}
 Integrating this inequality between $ t $ and $ t+\Delta $  yields  \eqref{td4} that obviously implies that $x_\gamma(\cdot) $ is an increasing function. In particular there exists $ x_\gamma^\infty\in \R\cap \{+\infty\} $ such that $x_\gamma(t) \nearrow x_\gamma^\infty $ as $ t\to +\infty $ and it remains to prove that  $x_\gamma^\infty=+\infty $. Assuming the contrary, we first notice that since $ |u_x|\le u \le \|u_0\|_{H^1} $ on $ \R^2 $, \eqref{defxg} then yields 
 \begin{equation}\label{contro}
 \lim_{t\to +\infty}  \int_{\R} (u^2 +u_x^2)\Phi'(\cdot-x_\gamma(t))= \lim_{t\to +\infty}  \int_{\R} (u^2 +u_x^2)\Phi'(\cdot-x_\gamma^\infty)=\gamma \; .
 \end{equation}
 Now, taking $ \Delta=1 $, \eqref{td4} forces
 $$
 \lim_{t\to  +\infty} \int_{t}^{t+1} \int_{x_\gamma(t)}^{x_\gamma(t)+2} u^2(\tau,s)\, ds\,d\tau =0 
 $$
 which, recalling that $ x_\gamma(t)\to  x_\gamma^\infty$ , leads to 
  $$
 \lim_{t\to  +\infty} \int_{t}^{t+1} \int_{x_\gamma^\infty}^{x_\gamma^\infty+2}u^2(\tau,s) \, ds\, d\tau =0 \; .
 $$
 In particular there exists a sequence $(t_n,x_n)_{n\ge 1}  \subset \R\times  [x_\gamma^\infty,x_\gamma^\infty+2] $ with $ t_n\nearrow +\infty $ such that 
 $ u(t_n,x_n) \to 0 $ as $ n\to \infty $. Therefore,  making use of the fact that $ |u_x|\le u $ on $ \R^2 $ forces, for any $(t,x_0)\in \R^2 $,  that 
 \begin{equation}
 u(t,x) \le e^{|x_0-x|} u(t,x_0) , \quad \forall x\in \R \;  ,
 \end{equation}
 we infer that for any $ A>0 $,
 \begin{equation}\label{td7}
 \lim_{n\to \infty} \sup_{ x\in[x_\gamma^\infty-A,  x_\gamma^\infty+A]} u(t_n,x) =0 \; .
 \end{equation}
Finally, taking $ A>0 $ such that $ x_\gamma^\infty-A<x_{\gamma'}(0) $ with $\gamma < \gamma'<\|u_0\|_{H^1}^2 $, we infer from \eqref{td7} and the monotony of 
 $  t\mapsto x_{\gamma'}(t) $ that 
 $$
 \lim_{n\to \infty}\int_{\R} (u^2+u_x^2)(t_n,\cdot) \Phi(\cdot - x_\gamma^\infty) =\gamma' \; .
 $$
This contradicts \eqref{contro} and concludes the proof of the lemma.$\hfill \square$\vspace*{2mm} \\
  \noindent
 {\bf Conflict of Interest }: The author declares that he has no conflict of interest.

\end{document}